\def\N{{\mathbb N}}
\def\R{\mathbb{R}}
\def\Z{{\mathbb Z}}
\def\E{{\mathbb E}}
\def\F{{\mathbb F}}
\def\W{{\mathbb W}}
\def\b{\beta}
\newcommand{\eps}{\epsilon}
\def\s{\sigma}
\def\s{\sigma}
\def\cL{{\mathcal B}}
\def\cC{{\mathcal C}}
\def\cL{{\mathcal L}}
\def\cU{{\mathcal U}}
\def\cX{{\mathcal X}}
\def\cY{{\mathcal Y}}
\def\rD{{\rm D}}
\def\rT{{\rm T}}
\def\rd{{\rm d}}
\def\tint{\textstyle\int}
\newcommand{\ti}{\tilde}
\def\la{\langle\,}
\def\ra{\,\rangle}
\newcommand\quotient[2]{
        \mathchoice
            {
                \text{\raise1ex\hbox{$#1$}\Big/\lower1ex\hbox{$#2$}}%
            }
            {
                #1\,/\,#2
            }
            {
                #1\,/\,#2
            }
            {
                #1\,/\,#2
            }
    }
\newcommand\quo[2]{
                \text{\raise1ex\hbox{$#1\!$}\big/\lower1ex\hbox{$\!#2$}}
  }
\newcommand\qu[2]{
                \text{\raise.8ex\hbox{$\scriptstyle#1$}/\lower.8ex\hbox{$\scriptstyle#2$}}
  }
\DeclareMathOperator{\pr}{pr}
\DeclareMathOperator{\supp}{supp}
\newcommand{\id}{\operatorname{id}}
\newcommand{\im}{\operatorname{im}}
\newcounter{qcounter}
\newtheorem{theorem}{Theorem}[section]
\newtheorem{lemma}[theorem]{Lemma}
\newtheorem{proposition}[theorem]{Proposition}
\newtheorem{corollary}[theorem]{Corollary}
\newtheorem{definition}[theorem]{Definition}
\newtheorem{remark}[theorem]{Remark}
\title{Counterexamples in Scale Calculus}
\author{Benjamin Filippenko}
\author{Zhengyi Zhou} 
\author{Katrin Wehrheim}
\address{Polyfold Lab, UC Berkeley, Evans Hall, Berkeley CA 94720-3840}
\begin{document}
\maketitle

\begin{abstract}
We construct counterexamples to classical calculus facts such as the Inverse and Implicit Function Theorems in Scale Calculus -- a generalization of Multivariable Calculus to infinite dimensional vector spaces in which the reparameterization maps relevant to Symplectic Geometry are smooth.
Scale Calculus is a cornerstone of Polyfold Theory, which was introduced by Hofer-Wysocki-Zehnder as a broadly applicable tool for regularizing moduli spaces of pseudoholomorphic curves. 
We show how the novel nonlinear scale-Fredholm notion in Polyfold Theory overcomes the lack of Implicit Function Theorems, by formally establishing an often implicitly used fact: 
The differentials of basic germs -- the local models for scale-Fredholm maps -- vary continuously in the space of bounded operators when the base point changes. We moreover demonstrate that this continuity holds only in specific coordinates, by constructing an example of a scale-diffeomorphism and scale-Fredholm map with discontinuous differentials.
This justifies the high technical complexity in the foundations of Polyfold Theory.
\end{abstract}

\section{From Calculus to Scale Calculus}

The Inverse and Implicit Function Theorems are core facts in Calculus for functions of one or several variables (i.e.\ maps $f:\R^m\to \R^n$). They also hold in all previously known contexts -- e.g.\ on Banach spaces\footnote{A Banach space is a vector space with a norm $X\to[0,\infty), x\mapsto \|x\|$ that induces a complete topology.
The spaces $X=\R^n$ with any norm 
are Banach spaces, but the term usually denotes infinite dimensional Banach spaces such as the space of square integrable functions $L^2(\R)=\{f:\R\to\R\,|\, \|f\|_{L^2}:= \int |f(x)|^2|  \rd x <\infty$ \}.
}
and on manifolds\footnote{A manifold is a topological space $X$ that can locally be described in terms of coordinates in $\R^n$. More formally, $X$ is also required to be second countable and Hausdorff, and the charts (local homeomorphisms to $\R^n$) are required to be smoothly compatible, which in particular implies that the dimension $n$ is fixed on connected components of $X$. For an introduction to manifolds see e.g.\ \cite{lee2013smooth}.} -- in which the classical chain rule holds. 

\medskip
\noindent
{\bf Chain Rule:} {\it If two maps $f:X\to Y$ and $g:Y\to Z$ are differentiable, then their composition $g\circ f:X \to Z, x\mapsto g(f(x))$ is differentiable. Its differential at $x\in X$ is given by composition of the differentials, $\rd (g\circ f) (x) = \rd g (f(x)) \circ \rd f(x)$.}
\medskip

Here and throughout we use the Fr\'echet notion of differentiability; see e.g.\ \cite[\S16.2]{lang2013undergraduate}. When $X,Y,Z$ are normed vector spaces, it guarantees that each differential $\rd f (x) : X \to Y$ at a given point $x$ is a linear map. In single variable Calculus for $X=Y=\R$ this map is multiplication $r \mapsto f'(x)r$ by the classical derivative $f'(x)\in\R$. 
More generally, when $X,Y$ are manifolds, then the differential $\rd f (x) : \rT_x X \to \rT_{f(x)} Y$ is a linear map between tangent spaces; see e.g.\ \cite[Prop.3.6]{lee2013smooth}.
The chain rule, e.g.\ \cite[\S15.2]{lang2013undergraduate}, is used for example to prove a formula relating the differentials of a function and its inverse as follows: Since $s^{-1}\circ s = \id $, we have $\rd s^{-1}(s(x)) \circ \rd s(x) = \rd (s^{-1}\circ s) (x) = \rd\,\id (x) = \id$, where $\id$ denotes the identity map on $X$ (and its tangent space $\rT_x X$), and thus the differential of $s^{-1}$ at $s(x)$ is inverse to the differential of $s$ at $x$.
 This is a key ingredient for the following classical result; see e.g.\ \cite[\S17.3]{lang2013undergraduate}.

\medskip
\noindent
{\bf Inverse Function Theorem:} {\it Let $s: X \to Y$ be a continuously differentiable map 
whose differential $\rd s(x_0): X \to Y$ at some $x_0\in X$ is an isomorphism (i.e.\ has a continuous inverse). 
Then there exists a neighborhood $U\subset X$ of $x_0$ such that the map $s:U\to s(U)$ is invertible with open image $s(U)\subset Y$, 
and the inverse $s^{-1}: s(U)\to U$ is continuously differentiable with differential $\rd s^{-1} (s(x))= \rd s(x)^{-1}$.  }
\medskip

Similarly, the chain rule is used to compute the implicit function $y:X\to Y$ that parameterizes  the locus defined by a function $h(x,y)=0$, as follows (for simplicity) in case $X=Y=\R$: Since $h(x,y(x))=0$, we have 
$\partial_x h  + \partial_y h \cdot y'(x)  = 0$ and thus $y'(x) = - \partial_x h / \partial_y h$. 
Note that this requires the partial derivative $\partial_y h$ to be nonzero (or more generally invertible as map $\rT_y Y \to \rT_{h(x,y)} Z$), and this in fact is also a sufficient condition for the local existence of the implicit function $y:X\to Y$, by the following classical result; see e.g.\ \cite[\S17.4]{lang2013undergraduate}.

\medskip
\noindent
{\bf Implicit Function Theorem:} {\it Let $h: X\times Y \to Z$ be a continuously differentiable map whose partial differential $\partial_Y h(x_0,y_0)$ is an isomorphism. Then there exist neighborhoods $U\subset X$ of $x_0$ and $V\subset Y$ of $y_0$ and a differentiable map $y: U \to V$ whose graph parameterizes the local zero set; that is, $h^{-1}(0)\cap (U\times V) = \{ (x, y(x) ) \,|\, x\in U\}$. }
\medskip

This result is critical for Differential Geometry, which studies ``smooth geometric shapes,'' i.e.\ manifolds, by describing them locally in terms of implicit functions. For example, the circle $S^1=\{(x,y)\in \R^2 \,|\, x^2 + y^2 = 1 \}$ has the structure of a 1-dimensional manifold because it can be covered by the four (smoothly compatible) charts arising from applying the Implicit Function Theorem to $h(x,y)=x^2 + y^2 - 1$, 
$$
S^1 \;=\; \{ (x, \pm \sqrt{1-x^2}) \,|\, -1<x<1\} \; \cup\;   \{ (\pm \sqrt{1-y^2}, y) \,|\, -1<y<1\} .
$$
In classical Calculus and Differential Geometry one can also study the zero sets of more general functions such as $h(x,y)=x^2 + y^2$, which do not meet the transversality condition of $\rd h$ being surjective. (Such transversality is equivalent, up to change of coordinates, to a partial differential being an isomorphism.)
{\it Singular zero sets are regularized by perturbing the function to achieve transversality.} The result is a well-defined cobordism class of manifolds of the expected dimension. In our example, $h:\R^2\to \R$ imposes one condition on two variables, so is expected to have 1-dimensional zero set. While the unperturbed zero set $h^{-1}(0)=\{(0,0)\}$ consists of a single point, its perturbations $(h-\eps)^{-1}(0)= \{ (x,y)\in \R^2 \,|\, x^2 + y^2 = \eps \}$ are either circles (for $\eps>0$) or empty sets (for $\eps<0$). These perturbed zero sets are all cobordant. In more elementary terms, the integral of a conservative vector field along $(h-\eps)^{-1}(0)$ is independent of $\eps$ (in fact zero). 

\medskip
{\it Scale Calculus was recently developed by Hofer-Wysocki-Zehnder \cite{HWZbook}\cite{MR2644764} as the cornerstone of Polyfold Theory, which provides an analogous perturbation theory for functions whose zero sets are the moduli spaces of pseudoholomorphic curves studied in Symplectic Geometry.} 
It satisfies a Chain Rule \cite[\S1]{HWZbook}, and with the appropriate scale-Fredholm notion
it satisfies an Implicit Function Theorem. 
But we show in \S\ref{sec:counterex} that, for general scale-differentiable (or even scale-smooth) functions, no reasonable version of the Inverse or Implicit Function Theorems can be true. This does not affect the validity of Polyfold Theory (as we make more explicit in \S\ref{sec:cont}), but it justifies novel extra conditions in the scale-Fredholm notion, and explains the level of technical difficulties in the polyfold approach to overcoming the foundational challenges in regularizing moduli spaces. The latter have been discussed at length, e.g.\ in \cite{fundamentalkuranishi}, and are not the topic of this paper -- apart from one such challenge having motivated the development of Scale Calculus.  
The following remark gives a brief introduction to Scale Calculus from this point of view; for a more in-depth motivation see \cite[\S2.2]{MR3576532}. The Scale Calculus notions require more analysis proficiency than the calculus level discussion so far. The basic claims and constructions in the rest of this paper should be accessible at the advanced calculus level -- when taking for granted the existence of a Scale Calculus in which the constructed maps are ``smooth.'' The proofs are formulated at the undergraduate analysis level such as in \cite{lang2013undergraduate} as much as possible, but require some standard graduate topology and analysis such as compactness considerations and H\"older and Sobolev estimates. Use of prior results in Scale Calculus is labeled.

\begin{remark} \label{rmk:sccalc}\rm
Scale Calculus works with a sequence $\E=(E_i)_{i\in\N_0}$ of Banach spaces with natural embeddings $E_{i+1}\hookrightarrow E_i$. 
This is motivated by the reparameterization map $\tau: \R\times \{f:S^1\to \R\} \to \{f:S^1\to \R\} , (s,f) \mapsto f(s+ \cdot)$ given by viewing the circle as the quotient $S^1=\R/\Z$. Its two-dimensional analogues appear crucially in the description of moduli spaces in symplectic geometry. While $\tau$ is not classically differentiable in any known norm on an infinite dimensional vector space of functions $\{f:S^1\to \R\}$, it is Fr\'echet differentiable as a map $\tau:\cC^{i+1}(S^1)\to \cC^i(S^1)$. Note here the shift in differentiability between the spaces $\cC^i(S^1)=\{ f:S^1\to \R \,|\, f, f', \ldots, f^{(i)} \;\text{continuous} \}$.
This notion of ``shifted differentiability'' reproduces classical Multivariable Calculus by viewing $\R^n$ as the constant sequence $\E=(E_i=\R^n)_{i\in\N_0}$.

Hofer-Wysocki-Zehnder \cite{HWZbook} generalized this notion to infinite dimensions while preserving the chain rule by requiring extra conditions both in the definition of differentiability and on the scale structure $(E_i)_{i\in\N_0}$ as follows: A scale-Banach space is given by sequences of compact\footnote{
Compactness of embeddings means that any bounded sequence in $E_{i+1}$ has a convergent subsequence in $E_i$. When $E_i$ is infinite dimensional, this requires nontrivial embeddings
$E_{i+1}\subsetneq E_i$.
} embeddings $E_{i+1}\hookrightarrow E_i$, whose intersection yields a vector space $E_\infty:=\bigcap_{i\in\N_0} E_i$ that is dense in each $E_i$.
Then a function $\tau:\E\to\F$ is scale-continuous if it is continuous as map $\tau:E_i\to F_i$ for all $i\in\N_0=\{0,1,2,\ldots\}$. Further, the notion of scale-differentiability requires classical differentiability of $\tau:E_{i+1}\to F_i$ together with a well-defined differential $\rd \tau (e) : E_i \to F_i$ for $e\in E_{i+1}$ and continuity of the maps $E_{i+1}\times E_i \to F_i, (e,X)\mapsto \rd\tau(e) X$ for $i\in\N_0$; see \cite[Definitions~1.1, 1.9]{HWZbook}. 
The latter can be phrased as scale-continuity of the tangent map $\rT\tau: \rT \E \to \rT\F, (e,X)\mapsto (\tau(e), \rd\tau(e)X)$, where the shift is encoded in the notion of tangent space $\rT\E=(E_{i+1}\times E_i)_{i\in\N_0}$. 

With these Scale Calculus notions, the above reparameterization map $\tau$ is scale-differentiable and in fact scale-smooth (i.e.\ all its iterated tangent maps $\rT^k\tau$ for $k\in\N$ are scale-differentiable) when specifying $\{f:S^1\to \R\}$ as the scale-Banach space of functions $( \cC^i(S^1) )_{i\in\N_0}$; see \cite[\S2.2]{MR3576532}. 
Here the smooth functions form a dense subspace $E_\infty=\cC^\infty(S^1)$ of each Banach space $E_i= \cC^i(S^1)$ in the scale structure. 
The Banach space $E_0=L^2(\R)$ and scale structure $E_i=H^{i,\delta_i}(\R)$ that we work with in \S\ref{sec:counterex} are somewhat more complicated since we require inner products and wish to work with a space of functions $f:\R\to \R$ whose domain is noncompact.
However, the above example is a good proxy for nonexperts since smooth functions with compact support $\cC^\infty_0(\R)$ are dense in $E_\infty$ and thus in each $E_i$. 
\hfill$\square$
\end{remark}

To regularize moduli spaces of pseudoholomorphic curves despite an absence of Inverse and Implicit Function Theorems, Hofer-Wysocki-Zehnder \cite{MR3683060} show that they are in fact the zero set of scale-Fredholm maps -- a special class of scale-differentiable functions, with the Implicit Function Theorem essentially built into the definition.
This is in stark contrast to classical Fredholm theory -- which establishes, e.g.,\ the Implicit Function Theorem as stated above for continuously differentiable maps between Banach spaces $X\times Y$ and $Z$, when the factor $X$ is finite dimensional. These assumptions are equivalent (after change of coordinates and splitting) to the (generally nonlinear) function $h:X\times Y\to Z$ being transverse (i.e.\ surjective differential) and Fredholm in the classical sense: At every $(x,y)\in h^{-1}(0)$ the differential $\rd h (x,y)$ is a (linear) Fredholm operator; that is, its kernel and cokernel are finite dimensional.
{\it Thus our results demonstrate that the highly nontrivial variation of the nonlinear Fredholm notion in Scale Calculus \cite[Definition 3.7]{HWZbook} is in fact necessary to obtain the desired perturbation theory \cite[Theorems 3.4, 5.5]{HWZbook}.}
This scale-Fredholm notion requires a contraction property -- after change of coordinates and splitting off finite dimensions in domain and target -- and we illuminate this definition in \S\ref{sec:cont} by showing that the contraction property implies a continuity of the differentials. This  is crucial to various proofs of \cite{HWZbook} but only implicitly stated. 
Unfortunately, this continuity holds only in specific coordinates since changes of coordinates in Scale Calculus generally do not preserve continuity of the differential -- another deviation from classical calculus facts that we construct a counterexample for in \S\ref{sec:dis}.
However, our results are sufficient to deduce persistence of transversality in neighborhoods of a transverse zero in Corollary~\ref{cor} for general scale-Fredholm maps. 
This further illuminates why the Inverse and Implicit Function Theorems -- while false for general scale-smooth maps -- actually do hold for scale-Fredholm maps.
\\


\medskip
\noindent
{\bf Acknowledgements:} This work unfolded during lunch at the Mathematical Sciences Research Institute (MSRI), which hosted our Polyfold Lab seminar in Spring 2018. We are profoundly grateful for the inspiring hospitality of the MSRI and the support and curiosity of other seminar participants, particularly Barbara Fantechi, Dusa McDuff, and Helmut Hofer. Invaluable feedback toward making the manuscript widely accessible was provided by Todd Kemp, Dusa McDuff, and the referees.
All authors were supported by NSF grant DMS-1708916.
We dedicate this work to Kris Wysocki.

\section{Counterexamples to Inverse and Implicit Function Theorems} \label{sec:counterex}

A full polyfold analogue of the Inverse Function Theorem would require replacing (open subsets of) Banach spaces by sc-retracts as defined in \cite[Definition 2.2]{HWZbook}. Somewhat simplified, a sc-retract $R=\im\rho$ is the image of a continuous map $\rho:E\to E$ on a Banach space $E$ satisfying $\rho\circ\rho=\rho$, where $E$ is equipped with a scale-structure with respect to which $\rho$ is sc-smooth.\footnote{Here and throughout we usually abbreviate `scale' with `sc.'}
As it turns out, the first nontrivial example of a sc-smooth retraction from \cite[Lemma~1.23]{MR2644764} provides the analytic basis for all of the counterexamples in this section.
To construct it explicitly (and fit our later needs), fix a smooth function $\beta: \R \to [0,\infty)$ with
support in $[-1,1]$ 
and $L^2$-norm $\int_{-\infty}^\infty \beta(x)^2 \rd x = 1$, denote its shifts by $\b_t:= \beta(e^{1/t}+\cdot)$ for $t>0$, and denote the $L^2(\R)$-inner product by $\langle f , g \rangle :=\int_{-\infty}^\infty f(x) g(x) \rd x$.
Then \cite[Lemma~1.23]{MR2644764} gives $E_0=\mathbb{R}\times L^2(\R)$ a scale structure $\E=(E_i)_{i\in\N_0}$ in which the following map is scale-smooth: 
\begin{equation*}
  \rho\,:\;  \mathbb{R}\times L^2(\R) \;\to\; \mathbb{R}\times L^2(\R), \qquad
    \rho (t, f) :=  
    \begin{cases}
    \bigl( t,  \langle f,\beta_t\rangle \, \beta_t  \bigr)&\text{for}\; t>0 ; \\
    (t, 0 ) &\text{for}\; t\leq 0 .
 \end{cases}
\end{equation*}
This map is moreover a retraction in the sense that $\rho\circ\rho = \rho$, and the corresponding sc-retract is 
\begin{equation}\label{eq:R}
R \,:=\; \im\rho \;=\;  \{(t, 0) \,|\, t\leq 0\}\,\cup\, \{(t, s\beta_t) \,|\, t>0, s\in \mathbb{R}\} \;\subset\; \mathbb{R}\times L^2(\R), 
 \end{equation}
with topology induced by its inclusion in $\R\times L^2(\R)$. 
The tangent spaces to this retract are defined as $\rT_{(t,f)} R = \im \rd \rho (t,f)$, which are $1$-dimensional for $t\leq 0$ and $2$-dimensional for $t>0$, as follows for $f=0$ from the computation of the differential
\footnote{
In the case $t=0$ this computation is based on the convergence $\int F \beta_t \to 0$ as $t\searrow 0$ for any fixed $F\in L^2(\R)$.}
\begin{align} \label{drho} 
{\rm d}\rho (t,0) : (T, F) \;\mapsto\; \tfrac{\rd }{\rd \eps}\bigr|_{\eps =0} \rho( t+ \eps T , \eps F \bigr) 
= \begin{cases} 
\bigl( T , \langle F , \beta_t \rangle \beta_t \bigr) &\quad \text{for}\; t>0 ; \\
\bigl( T , 0 \bigr)  &\quad\text{for}\; t\leq 0 . 
\end{cases}
\end{align}
While $\rho$ is not classically differentiable (see Remark~\ref{rmk:differentiability}), the above map is the differential of $\rho$ in scale calculus.
And from here we quickly obtain a first counterexample to the Inverse Function Theorem, in which the map is not invertible since it is not even locally surjective.

\begin{lemma}\label{lem:retract}
There exists a sc-smooth map $s: O \to R$ between sc-retracts $O,R$, whose differential $\rd s(0): \rT_0O \to \rT_{s(0)}R$ is a sc-isomorphism, but $s(O)\subset R$ contains no neighborhood of $s(0)$. 
\end{lemma}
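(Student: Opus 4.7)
The plan is to exploit the non-uniform ``dimension'' of the sc-retract $R$ from \eqref{eq:R}. At the base point $(0,0)$ the tangent space $\rT_{(0,0)} R$ is only one-dimensional (it equals $\R \times \{0\}$, by \eqref{drho}), while every neighborhood of $(0,0)$ in $R$ contains two-dimensional ``fibers'' $\{(t, s\beta_t) \,|\, s \in \R\}$ for arbitrarily small $t>0$. A map whose image is the one-dimensional ``skeleton'' $\{(t,0) \,|\, t\in\R\}\subset R$ therefore cannot be locally surjective at $(0,0)$, even though its differential at $0$ hits all of the relevant tangent space.

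Concretely, I would take $O = \R$ as a trivial sc-retract (constant scale structure $E_i=\R$ for all $i$, retraction equal to $\id$), let $R$ be the sc-retract of \eqref{eq:R}, and define
\[
s: O \to R, \qquad s(t) := (t,0).
\]
The first step is to check that $s$ lands in $R$: for $t\leq 0$ the point $(t,0)$ belongs to the first piece of \eqref{eq:R}, and for $t>0$ it equals $(t,0\cdot\beta_t)$ in the second piece. Sc-smoothness of $s$ as a map $\R \to \R\times L^2(\R)$ is immediate since $s$ is linear with image in the dense core $\R\times\{0\}\subset E_\infty$; sc-smoothness as a map into the sc-retract $R$ then follows from the HWZ definition of sc-smoothness for retract-valued maps (it coincides with sc-smoothness of the composition into the ambient sc-Banach space).

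Next I would identify the differential. Linearity of $s$ gives $\rd s(0)(T) = (T,0)$, and \eqref{drho} applied at $t=0$ yields $\rT_{(0,0)} R = \im \rd\rho(0,0) = \R\times\{0\}$. Therefore $\rd s(0):\R\to\R\times\{0\}$ is a linear bijection between finite-dimensional sc-Banach spaces (on both sides the scale structure is forced to be constant), and hence a sc-isomorphism.

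Finally I would show that $s(O) = \{(t,0)\,|\,t\in\R\}$ contains no neighborhood of $(0,0)$ in $R$. For any $\ep>0$ pick $t\in(0,\ep/2)$ and set $p_\ep := \bigl(t,\tfrac{\ep}{2}\beta_t\bigr)\in R$. Using $\|\beta_t\|_{L^2}=\|\beta\|_{L^2}=1$, the ambient norm satisfies $\|p_\ep\|^2 = t^2 + \ep^2/4 < \ep^2/2 < \ep^2$, so $p_\ep$ lies in the $\ep$-ball around $(0,0)$ in $R$; yet its second coordinate is nonzero, so $p_\ep\notin s(O)$. There is no real obstacle in the argument beyond small bookkeeping: verifying that $O=\R$ with identity retraction qualifies as a sc-retract, and that the induced scale structures on the finite-dimensional tangent spaces are trivially comparable so that ``sc-isomorphism'' reduces to linear bijection. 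The heart of the construction is just the geometric observation that the scale-smooth retraction $\rho$ produces a retract $R$ whose local dimension jumps at $t=0$, allowing $\rd s(0)$ to be onto while $s$ itself misses the fibers.
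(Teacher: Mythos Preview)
Your proof is correct and follows essentially the same construction as the paper: the same choice of $O=\R$, the same map $s(t)=(t,0)$, the same identification of $\rd s(0)$ and $\rT_{(0,0)}R$ via \eqref{drho}, and the same obstruction to local surjectivity via points $(t,s\beta_t)$ with $s\neq 0$ near $(0,0)$. The only cosmetic difference is that the paper exhibits the explicit sequence $(t,t\beta_t)\to(0,0)$ rather than your $\ep$-ball argument.
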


\begin{proof}
The identity map $\R\to\R$ is a sc-smooth retraction with corresponding sc-retract given by $O:=\R$. 
Then the map $s:O\to R, t \mapsto (t,0)$ is sc-smooth (as defined in \cite[Definition~2.4]{HWZbook}) since $\R\to \R\times L^2(\R), t \mapsto (t,0)$ is linear and thus sc-smooth. 
Its differential at $0\in O$ is the map ${\rm d} s(0): T \mapsto (T, 0)$ from $T_0 O = \R$ to $T_{(0,0)} R = \im {\rm d} \rho (0,0) = \R\times \{0\}$ from \eqref{drho}. While this differential is an isomorphism, the image $s(O)\subset R$ does not contain any element of the line $(t , t \beta_t ) \in O$ for $t>0$, which for $t\to 0$ converges to $s(0)=(0,0)$ as $\| t \beta_t\|_{L^2} = t$.
\end{proof}

Next, we show that the complications are not caused by the retracts, but by the differences between classical and scale differentiability.

\begin{lemma} \label{lem:IFT}
There exists a sc-smooth map $s: \E \to \E$ on a sc-Banach space $\E$, whose differential $\rd s(0): \E \to \E$ is a sc-isomorphism, but $s(\E)\subset\E$ contains no neighborhood of $s(0)$. 
\end{lemma}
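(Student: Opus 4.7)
The plan is to promote the retract-level construction of Lemma~\ref{lem:retract} to a small perturbation of the identity on the full sc-Banach space $\E := \R\times L^2(\R)$ equipped with the sc-structure of \cite[Lemma~1.23]{MR2644764}. Explicitly, define
\[
s:\E\to\E, \qquad s(t,f) := (t,f) - (0,\rho_2(t,f)),
\]
where $\rho_2$ denotes the second coordinate of the retraction $\rho$ recalled in the paragraph preceding Lemma~\ref{lem:retract}. Equivalently $s = \id_\E - P\circ\rho$, where $P:\R\times L^2(\R)\to\R\times L^2(\R)$, $(a,b)\mapsto(0,b)$ is a bounded linear projection. Geometrically $s$ sends the slice $\{t\}\times L^2(\R)$ for $t>0$ onto $\{t\}\times \beta_t^\perp$, exhibiting the same ``missing direction'' phenomenon that made Lemma~\ref{lem:retract} work, but now between spaces that are honest sc-Banach spaces.

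For sc-smoothness, $\rho$ is sc-smooth by \cite[Lemma~1.23]{MR2644764}, $P$ is linear and continuous on every level, and the sc-chain rule together with linearity of sc-smoothness yield that $s$ is sc-smooth. For the differential at the origin, formula~\eqref{drho} gives $\rd\rho(0,0)(T,F)=(T,0)$, so $P\circ\rd\rho(0,0)(T,F)=(0,0)$ and therefore
\[
\rd s(0,0)(T,F) = (T,F),
\]
i.e.\ $\rd s(0,0)=\id_\E$, which is trivially a sc-isomorphism.

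To see that $s(\E)$ contains no neighborhood of $s(0,0)=(0,0)$, I examine the image slice by slice. For $t>0$, the formula $s(t,f) = (t,\, f - \langle f,\beta_t\rangle \beta_t)$ shows the second coordinate lies in $\beta_t^\perp\subset L^2(\R)$, so for any $t>0$ and any $c\neq 0$ the point $(t,c\beta_t)$ is omitted from $s(\E)$. I then arrange such omitted points to be arbitrarily close to $(0,0)$: on the ground level $\|(t,c\beta_t)\|_{E_0}^2=t^2+c^2$ goes to zero as $t,c\to 0$, and on a higher level $E_i$ I first pick $t>0$ small and then $c\neq 0$ small enough relative to $\|\beta_{t}\|_{E_i}$ to make the $E_i$-norm as small as required. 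This produces, for every $i$ and every $\eps>0$, a missed point inside the $\eps$-ball of $(0,0)$ in $E_i$, contradicting any reasonable meaning of ``$s(\E)$ is a neighborhood of $s(0)$ in $\E$.''

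I expect the only subtle step to be this last one: although the argument is morally ``project onto $\beta_t^\perp$,'' one must unpack the weighted Sobolev structure of \cite[Lemma~1.23]{MR2644764} precisely enough to quantify the trade-off between the growth of $\|\beta_{t}\|_{E_i}$ as $t\searrow 0$ and the shrinking of the coefficient $c$, so that the omitted points $(t_n,c_n\beta_{t_n})$ are genuinely available inside every prescribed $\E$-neighborhood of the origin. Everything else -- sc-smoothness and the differential computation -- reduces directly to facts already established for $\rho$ together with the linearity of $P$.
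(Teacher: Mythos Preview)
Your construction is correct and is in fact \emph{identical} to the paper's: writing $s(t,f)=(2t,f)-\rho(t,f)$ as the paper does, or $s=\id_\E-P\circ\rho$ as you do, both yield $s(t,f)=(t,\,f-\langle f,\beta_t\rangle\beta_t)$ for $t>0$ and $s(t,f)=(t,f)$ for $t\leq 0$; the differential and the missed-points argument are then the same. Your worry about higher scales in the last step is unnecessary: the topology on the sc-Banach space $\E$ is that of $E_0=\R\times L^2(\R)$, so it suffices to take the omitted points $(t,t\beta_t)$ with $\|t\beta_t\|_{L^2}=t\to 0$, exactly as the paper does.
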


\begin{proof}
After giving $\E = \R\times L^2(\R)$ a sc-Banach space structure as in \cite[Lemma~1.23]{MR2644764}, we obtain a sc-smooth map
$$
s \,:\; \R\times L^2(\R) \;\to\; \R\times L^2(\R),  \qquad (t, f) \;\mapsto\;  (2t,f) - \rho(t,f) 
= \begin{cases}
\bigl( t , f - \langle f , \beta_t \rangle \beta_t \bigr) &\quad \text{for}\; t>0 ; \\
\bigl( t , f \bigr)  &\quad\text{for}\; t\leq 0 . 
\end{cases}
$$ 
Its differential $\rd s(0,0): (T,F) \mapsto (2T,F) - \rd \rho(0,0)(T,F) = (T,F)$ is the identity, hence an isomorphism, but the image of $s$ does not contain the line $(t , t \beta_t ) \underset{t \to 0}{\to} (0,0)$ for $t>0$ since $f\mapsto  f - \langle f , \beta_t \rangle \beta_t$ is projection to the orthogonal complement of $\R\beta_t\subset L^2(\R)$. 
\end{proof}

In fact, local invertibility is unclear even if the differentials are sc-isomorphisms on an open set. 

\medskip
\noindent
{\bf Question: }{\rm 
Given a sc-smooth map $s: \E \to \F$, whose differential $\rd s(e): \E \to \F$ is a sc-isomorphism for every $e\in \E$, is $s$ (locally) bijective? 
}

\medskip

We suspect that the answer may in fact be `no' as we have the following example with discontinuous inverse.

\begin{lemma} \label{lem:IFT2}
There exists a sc-smooth map $\tilde s: \F \to \F$, whose differential $\rd \tilde s(e): \F \to \F$ is a sc-isomorphism for every $e\in \F$, but whose inverse $\tilde s^{-1}: F_i \to F_0$ is not continuous on any scale $i\in\N_0$. 
\end{lemma}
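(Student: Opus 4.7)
The plan is to modify the map $s$ from Lemma~\ref{lem:IFT} so as to make it a global bijection with everywhere invertible differential, while retaining a pathological inverse. The reason $s$ fails to be locally surjective is that its $f$-partial differential $\id-\beta_t\otimes\beta_t$ annihilates the $\beta_t$-line. Replacing this projection by the invertible rank-one perturbation $\id-(1-\mu(t))\beta_t\otimes\beta_t$ for a $C^\infty$ factor $\mu:\R\to[0,1]$ with $\mu(t)=0$ for $t\leq 0$ and $\mu(t)>0$ for $t>0$ (for instance $\mu(t)=e^{-1/t}$ on $(0,\infty)$) should render $\tilde s$ bijective, while the property $\mu(t)\searrow 0$ should force the inverse to become arbitrarily large near $t=0$.

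Concretely, give $\F:=\R\times L^2(\R)$ the sc-structure of \cite[Lemma~1.23]{MR2644764} and set
$$
\tilde s(t,f) \,:=\; \begin{cases}\bigl(t,\,f-(1-\mu(t))\langle f,\beta_t\rangle\beta_t\bigr) & t>0,\\ (t,f) & t\leq 0.\end{cases}
$$
I would first verify sc-smoothness by writing $\tilde s=\id-(1-\mu\circ\pi_1)(\rho-J)$ with $\pi_1(t,f):=t$ and $J(t,f):=(t,0)$, and invoking the chain and product rules together with the sc-smoothness of $\rho$. Invertibility of the differential then follows by direct computation parallel to \eqref{drho}: for $t>0$ the $f$-block is $\id-(1-\mu(t))\beta_t\otimes\beta_t$, whose unique nontrivial eigenvalue is $\mu(t)>0$; for $t\leq 0$ and at $(0,f)$ (using that $\beta_t\to 0$ in every scale $F_i$ with $i\geq 1$) the differential is the identity. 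Bijectivity follows by solving the rank-one equation explicitly via Sherman--Morrison:
$$
\tilde s^{-1}(s,g) \,=\; \begin{cases}\bigl(s,\,g+\tfrac{1-\mu(s)}{\mu(s)}\langle g,\beta_s\rangle\beta_s\bigr) & s>0,\\(s,g) & s\leq 0.\end{cases}
$$

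To witness discontinuity of $\tilde s^{-1}:F_i\to F_0$ simultaneously on every scale $i\in\N_0$, I would test it on the single sequence $(t_n,g_n):=\bigl(t_n,\sqrt{\mu(t_n)}\,\beta_{t_n}\bigr)$ with $t_n\searrow 0$. Its $F_i$-norm is $t_n+\sqrt{\mu(t_n)}\cdot\|\beta_{t_n}\|_{F_i}$, which tends to $0$ both for $i=0$ (using $\|\beta_{t_n}\|_{F_0}=1$ together with $\sqrt{\mu(t_n)}\to 0$) and for $i\geq 1$ (using $\|\beta_{t_n}\|_{F_i}\to 0$ from the sc-structure of \cite[Lemma~1.23]{MR2644764}). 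However, the explicit inverse formula yields $\tilde s^{-1}(t_n,g_n)=\bigl(t_n,\,\mu(t_n)^{-1/2}\beta_{t_n}\bigr)$, whose $F_0$-norm $t_n+\mu(t_n)^{-1/2}$ diverges to $+\infty$; therefore $\tilde s^{-1}(t_n,g_n)$ cannot converge to $\tilde s^{-1}(0,0)=(0,0)$ in $F_0$.

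The main technical hurdle I anticipate is establishing sc-smoothness of $\tilde s$ across the corner $t=0$ for $f\neq 0$, where the identity branch and the nonlinear branch must dovetail. I expect this to reduce cleanly, via the product rule in scale calculus and the smoothness of $\mu$, to the sc-smoothness of $\rho-J$ already furnished by \cite[Lemma~1.23]{MR2644764}.
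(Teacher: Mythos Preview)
Your construction is an appealing and genuinely simpler variant of the paper's proof --- the paper instead enlarges the space to $\R\times\R\times L^2(\R)$ and uses an auxiliary coordinate $y$ that swaps with the $\beta_t$-direction --- but your argument as written has a concrete error that makes the conclusion fail for $i\geq 1$.

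The claim ``$\|\beta_{t_n}\|_{F_i}\to 0$ for $i\geq 1$ from the sc-structure'' is false. In the sc-structure of \cite[Lemma~1.23]{MR2644764} one has $F_i=\R\times H^{i,\delta_i}(\R)$ with $\delta_i>0$, and since $\beta_t=\beta(e^{1/t}+\cdot)$ is supported near $-e^{1/t}$ one gets $\|\beta_t\|_{H^{i,\delta_i}}\gtrsim e^{\delta_i(e^{1/t}-1)}\to\infty$. With your choice $\mu(t)=e^{-1/t}$ the test sequence satisfies $\sqrt{\mu(t_n)}\,\|\beta_{t_n}\|_{H^{i,\delta_i}}\gtrsim e^{-1/(2t_n)+\delta_i e^{1/t_n}}\to\infty$, so $(t_n,g_n)$ does \emph{not} converge to $(0,0)$ in $F_i$. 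Worse, for this $\mu$ your inverse $\tilde s^{-1}$ is actually continuous as a map $F_i\to F_0$ for every $i\geq 1$: for $f\in H^{i,\delta_i}$ one has $|\langle f,\beta_t\rangle|\leq C\|f\|_{H^{i,\delta_i}}e^{-\delta_i e^{1/t}}$, and $\mu(t)^{-1}e^{-\delta_i e^{1/t}}=e^{1/t-\delta_i e^{1/t}}\to 0$, so the extra term $\tfrac{1-\mu(t)}{\mu(t)}\langle f,\beta_t\rangle\beta_t$ tends to $0$ in $L^2$ uniformly on $F_i$-bounded sets. Thus the example, as stated, does not prove the lemma.

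The fix is to let $\mu$ decay fast enough to dominate the exponential blow-up of $\|\beta_t\|_{H^{i,\delta_i}}$, for instance $\mu(t)=e^{-2e^{1/t^2}}$. This is precisely the decay rate $\phi(t)=e^{-e^{1/t^2}}$ the paper uses; indeed the paper verifies (via its map $g_0$) that $\phi(t)\beta_t\to 0$ in every $H^{k,\delta}$, which is exactly the statement $\sqrt{\mu(t)}\,\beta_t\to 0$ in all scales that you need. With this adjusted $\mu$ your decomposition $\tilde s=\id-(1-\mu\circ\pi_1)(\rho-J)$ still gives sc-smoothness, the differentials are still sc-isomorphisms (your Sherman--Morrison argument is correct, and at $t=0$ one checks $\rd(\rho-J)(0,f_0)=0$ for $f_0\in E_1$ so $\rd\tilde s(0,f_0)=\id$), and now the single test sequence $(t_n,\sqrt{\mu(t_n)}\beta_{t_n})\to(0,0)$ in every $F_i$ while $\tilde s^{-1}(t_n,g_n)=(t_n,\mu(t_n)^{-1/2}\beta_{t_n})$ has $F_0$-norm $\to\infty$. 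So your strategy is sound and arguably more economical than the paper's (no extra $\R$-factor), but the choice of $\mu$ is not a free parameter: it must outrun the weighted Sobolev norms, and $e^{-1/t}$ does not.
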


\begin{proof}
We modify the construction of Lemma~\ref{lem:IFT} by adding a new $\R$ component. The map\\ $\ti s: \R \times \R \times L^2(\R)  \to  \R \times \R \times L^2(\R)$ is defined by
\begin{equation}\label{eqn:map}
\ti s \,:\; (t,y,f)  \;\mapsto\; 
\begin{cases}
\bigl( t , y+\phi(t)\la f, \beta_t\ra,  f - \la f , \beta_t \ra \beta_t + y \phi(t)\beta_t \bigr) &\quad \text{for}\; t>0 ; \\
\bigl( t , y, f \bigr)  &\quad\text{for}\; t\leq 0 . 
\end{cases}
\end{equation}
where $\phi(t) = 0 $ for $t\le 0$ and $\phi(t) =e^{-e^{1/t^2}}$ for $t>0$. We will show that this choice of $\phi\in\cC^\infty(\R)$ guarantees 
classical smoothness of 
\begin{equation}\label{eqn:g_0}
g_0 \,:\; \R \;\to\; H^{k,\delta}(\R), \qquad t \;\mapsto\; \phi(t)\beta_t  \qquad\text{for}\; k\geq 0, \delta\geq 0.
\end{equation}
Here the weighted Sobolev space $H^{k,\delta}(\R)$ is the completion of the smooth compactly supported functions $\cC^\infty_0(\R)$ with respect to the norm
$\| f \|_{H^{k,\delta}} = \sum_{i=0}^k \| e^{\delta |x|} f^{(i)}(x) \|_{L^2}$.  
Smoothness of \eqref{eqn:g_0} then implies classical smoothness of $\R\times L^2(\R) \to \R, \; (t,f) \mapsto \phi(t)\la f, \beta_t\ra = \la f , g_0(t) \ra$ and thus, together with Lemma~\ref{lem:IFT}, proves sc-smoothness of \eqref{eqn:map} -- using a scale structure $E_i=H^{i,\delta_i}(\R)$ for $\delta_{i+1}>\delta_i\geq 0$ on $E_0=L^2(\R)$. 
To show smoothness of \eqref{eqn:g_0} we express it in the general form $g(t) = \psi(t) \phi(t) \gamma_t$ with $\gamma_t:=\gamma (e^{1/t} + \cdot)$ for $\gamma=\beta$ and $\psi\equiv 1$.
Any map of this form with $\psi\in\cC^\infty((0,\infty))$ and compactly supported $\gamma=\tfrac{\rd^k}{\rd t^k}\beta \in\cC^\infty(\R)$ has derivative zero for $t\leq 0$, and for $t>0$ we have
$ \tfrac{\rd}{\rd t} g(t) = \psi'(t) \phi(t) \gamma_t 
+ \psi(t) \phi'(t) \gamma_t 
- \psi(t) \phi(t) \tfrac{1}{t^2} e^{1/t} \gamma' _t$. 
So $\tfrac{\rd}{\rd t} g= g_1 + g_2 + g_3$ is the sum of three functions of the same form, with 
$\psi_1(t)=\psi'(t)$, $\psi_2(t)= \tfrac{2}{t^3} e^{1/t^2} \psi(t)$, $\psi_3(t)=\tfrac{1}{t^2} e^{1/t} \psi(t)$,
$\gamma_1=\gamma_2=\gamma$, and $\gamma_3=\gamma'$. Thus to prove continuity of all derivatives of $g$ it suffices
to prove $\|w_\delta \psi(t) \phi(t) \gamma_t\|_{L^2}= \psi(t) \phi(t) \|w_\delta  \gamma_t\|_{L^2}\to 0$ for $t\searrow 0$ with weight function $w_\delta(x)=e^{\delta |x|}$ and any function $\psi$ 
obtained from $\psi_0(t)=1$ in finitely many steps of multiplying with $\tfrac{2}{t^3} e^{1/t^2}$ or $\tfrac{1}{t^2} e^{1/t}$, 
or taking the $t$-derivative. 
This yields a convex combination of functions of the form $\psi_{\ell,m,n}(t)=\tfrac{1}{t^\ell} e^{m/t^2} e^{n/t}$ for $\ell,m,n\in\N$.
Since $\gamma=\tfrac{\rd^k}{\rd t^k}\beta$ is supported in $[-1,1]$
we can estimate 
$$  \textstyle
\|w_\delta \gamma_t\|_{L^2}^2 \;=\; \int_{-\infty}^\infty \bigl| e^{\delta |y - e^{1/t}|} \gamma(y) \bigr|^2  \rd y \;\leq\; \|\gamma\|_\infty e^{\delta (e^{1/t} + 1)} \;\leq\; C e^{\delta e^{1/t}}.
$$
Then change of variables $x=\frac 1 {t^2} \to \infty$ yields the desired convergence
\begin{align*}
\lim_{t \searrow 0} \psi_{\ell,m,n}(t) \phi(t) \|w_\delta  \gamma_t\|_{L^2}
&\;\leq\; 
\lim_{t \searrow 0} \tfrac{1}{t^\ell} e^{\delta e^{1/t} + m/t^2 + n/t -e^{1/t^2}} \\ &
\;=\; 
\left( \lim_{x\to\infty} x^{\frac \ell 2} e^{- \frac 12 e^x} \right) \cdot e^{
\lim_{x\to\infty} ( \delta e^{\sqrt{x}} + m x + n \sqrt{x} - \frac 12 e^x )}
\;=\; 0 .
\end{align*}
To prove that the differentials $\rd \ti s (t,y,f)$ are sc-isomorphisms for all $(t,y,f)\in\R^2\times L^2(\R)$, first note that the differential is the identity for $t\leq 0$. Next, for fixed $t>0$ and splitting off the first $\R$-factor, the map
$s_t:= {\rm pr}_{\R\times L^2(\R)} \circ \ti s(t,\cdot,\cdot): \R \times L^2(\R) \to \R \times L^2(\R)$ is linear with inverse
$$
s_t^{-1}(y,f)  \; = \; 
\left( \frac{\la f, \beta_t \ra}{\phi(t)} \, , \,  f - \la f , \beta_t \ra \beta_t + \frac{y \phi(t) - \la f, \beta_t \ra } {\phi(t)^2} \beta_t \right).
$$
Now the full differential 
$\rd \tilde s(t,y,f) : (T, Y, F) \mapsto \bigl( T \,,\, T \tfrac{\rd}{\rd t} s_t(y,f) + s_t(Y,F) \bigr) $ for $t>0$
has inverse $(T',Y',F') \mapsto \bigl( T', s_t^{-1} \bigl( (Y',F') - T' \tfrac{\rd}{\rd t} s_t(y,f)  \bigr) \bigr)$.
This shows that in fact $\rd \tilde s(t,y,f)$ is a sc-isomorphism for any fixed $(t,y,f)\in\R^2\times L^2(\R)$, since $\beta_t$ is smooth with compact support, so that the bounded linear operators $\rd \ti s(t,y,f)$ and $\rd \ti s(t,y,f)^{-1}$ on $\R^2\times L^2(\R)$ restrict to bounded linear operators on the scales $\R^2\times H^{i,\delta_i}$.
On the other hand, the inverse of the nonlinear map $\ti s$, 
$$
\ti s^{-1} \,:\; (t,y,f)  \;\mapsto\; 
\begin{cases}
\bigl(t, \frac{\la f, \beta_t \ra}{\phi(t)},  f - \la f , \beta_t \ra \beta_t + \frac{y \phi(t) -\la f, \beta_t \ra } {\phi(t)^2} \beta_t \bigr) &\quad \text{for}\; t>0 ; \\
\bigl(t, y, f \bigr)  &\quad\text{for}\; t\leq 0 
\end{cases}
$$
is not even continuous as a map $\R^2\times H^{i,\delta_i}(\R)\to \R^2\times L^2(\R)$. To see this, pick $f \in H^{i,\delta_i}(\R)$ such that $f(x) = e^{-\delta_i |x|} x^{-2}$ for $|x|>1$. Then the second component of $\ti s^{-1}(t,0,f)$ for $0<t\leq 1$ satisfies an estimate 
$$
{\rm pr}_{\R_y}\bigl(\ti s^{-1}(t, 0, f)\bigr) \;=\;
\tfrac{\la f, \beta_t \ra}{\phi(t)} 
\;\geq\; \tfrac{ e^{-\delta_i (e^{1/t}+1)} (e^{1/t}+1)^{-2} }{\phi(t)} 
\;=\;  \tfrac 14 e^{- 2\delta_i e^{1/t} - 2/t + e^{1/t^2}} \;\underset{t\to0}{\longrightarrow}\; \infty , 
$$ 
so does not extend continuously to ${\rm pr}_{\R_y}\bigl(\ti s^{-1}(0, 0, f)\bigr) = 0$.
\end{proof}

\begin{remark} {\rm
Lemma~\ref{lem:IFT2} also provides a counterexample to the Implicit Function Theorem and its classical consequence that zero sets of smooth Fredholm maps with surjective linearization are smooth manifolds, as follows.

{\it 
Let $\hat {s}:\R\times \R \times L^2(\R) \to L^2(\R)$ denote the projection of \eqref{eqn:map} to $L^2(\R)$. Then $\rd \hat{s}$ is surjective everywhere but the zero set $\hat{s}^{-1}(0)$ is
$$
\bigl\{(t,y,0) \,\big|\, t\le 0, y \in \R \bigr\} \; \cup\;  \bigl\{(t, 0, v\beta_t) \,\big|\, t>0, v\in \R \bigr\}.
$$
This subset of $\R^2\times L^2(\R)$ is not a topological manifold, as it admits no manifold chart at $(0,0,0)$.}

This can be seen by failure of local compactness of $\hat{s}^{-1}(0)$ as follows: Given any $\eps>0$, the intersection $\hat{s}^{-1}(0)\cap B_\epsilon$ with the open $\eps$-ball in $\R^2\times L^2(\R)$ centered at $(0,0,0)$ contains the sequence $e_n=(\frac 1n,0, \frac \eps 2 \beta_{1/n})$ for $n > \frac 2{\sqrt{3}} \eps$, which has no convergent subsequence in $\R^2\times L^2(\R)$ since $\|\beta_{1/n} - \beta_{1/m}\|_{L^2(\R)} = 2$ for $m\gg n$. }
\hfill$\square$
\end{remark}

Next, we obtain an even sharper contrast to the classical Implicit Function Theorem by constructing a nonlinear sc-smooth map with surjective Fredholm linearizations that has a branched 1-dimensional zero set.

\begin{theorem} \label{thm:IFT}
There exists a sc-smooth map $h: \R\times\E \to \E, (t,e) \mapsto h_t(e)$ on a sc-Banach space $\E$, whose partial differential $\rd h_0: \E\to\E$ is a sc-isomorphism, but whose zero set branches at $(0,0)$ in the sense that $h^{-1}(0)=\{(t,0),(t,z(t))\}$ with a sc-smooth function 
$z:\R\to \E$ such that $z(t)=0$ for $t\leq 0$ and $z(t)\neq 0$ for $t>0$.
In fact, $h$ is transverse to $0$ in the sense that $\rd h (t,e)$ is surjective for all $(t,e)\in \R\times E$, 
and $\rd h_t (e)$ is surjective whenever $h_t(e)=0$. 
\end{theorem}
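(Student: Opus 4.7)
The plan is to construct $h$ concretely on $\E = L^2(\R)$ equipped with the scale structure $E_i = H^{i,\delta_i}(\R)$ from Lemma~\ref{lem:IFT2}, by cutting the infinite-dimensional kernel of the sc-smooth projection from Lemma~\ref{lem:IFT} down to two isolated zeros via a quadratic factor in $c := \la f,\beta_t\ra$. Concretely, I propose
\begin{equation*}
h_t(f) \,:=\; \begin{cases} f - \la f,\beta_t\ra\beta_t + \la f,\beta_t\ra\bigl(\la f,\beta_t\ra - \phi(t)\bigr)\beta_t & \text{for } t>0, \\ f & \text{for } t \le 0,\end{cases}
\end{equation*}
which in the decomposition $f = f^\perp + c\beta_t$ with $f^\perp \perp \beta_t$ reads $h_t(f) = f^\perp + c(c-\phi(t))\beta_t$ for $t>0$. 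The projection forces $f^\perp = 0$ at any zero, confining $f$ to the line $\R\beta_t$, and the quadratic factor $c(c-\phi(t))$ then cuts the line parameter down to the two desired values.

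Sc-smoothness of $h$ assembles from three sc-smooth building blocks: the projection $f \mapsto f - \la f,\beta_t\ra\beta_t$ is sc-smooth by Lemma~\ref{lem:IFT}; the damped linear term $\phi(t)\la f,\beta_t\ra\beta_t = \la f, g_0(t)\ra\beta_t$ is sc-smooth because $g_0(t) = \phi(t)\beta_t$ is classically smooth into each $H^{k,\delta}(\R)$ by Lemma~\ref{lem:IFT2}; and $\la f,\beta_t\ra^2\beta_t$ factors as the product of the sc-smooth scalar $\la f,\beta_t\ra$ with the sc-smooth vector $\la f,\beta_t\ra\beta_t$, where the weighted Sobolev estimates from Lemma~\ref{lem:IFT2} (applied after integration by parts $\la f,\beta'_t\ra = -\la f',\beta_t\ra$) provide the rapid decay $|\la f',\beta_t\ra| \lesssim e^{-\delta_1 e^{1/t}}$ needed for sc-continuity at $t=0$. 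The zero set is then algebraic: $h_t(f) = 0$ forces $f^\perp = 0$ and $c(c-\phi(t)) = 0$, hence $h^{-1}(0) = \{(t, 0) : t\in \R\} \cup \{(t, \phi(t)\beta_t) : t > 0\}$, with the branch $z(t) := \phi(t)\beta_t$ (extended by $0$ for $t \le 0$) being sc-smooth by exactly the computation of Lemma~\ref{lem:IFT2}.

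Since $h_t = \id$ on $\{t \le 0\}$, scale-continuity of the sc-differential forces $\rd h_0(0) = \id_\E$, a sc-isomorphism. Direct computation gives $\rd_f h_t(f)[F] = F^\perp + (2c - \phi(t))\la F,\beta_t\ra \beta_t$; the coefficient equals $-\phi(t)$ at $c=0$ and $+\phi(t)$ at $c = \phi(t)$, both nonzero for $t>0$, so $\rd h_t(e)$ is in fact a sc-isomorphism at every zero.

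The main obstacle is the stronger claim that the full sc-differential $\rd h(t,f):\R \times \E \to \E$ is surjective at \emph{every} $(t,f)$, because $\rd_f h_t$ drops the $\beta_t$-direction on the critical locus $\{c = \phi(t)/2\}$. The plan is to show that $\partial_t h(t,f)$ restores the missing $\beta_t$-component: using $\partial_t \beta_t = -t^{-2}e^{1/t}\beta'_t$ one computes $\la \partial_t h, \beta_t\ra = (2c - \phi(t) - 1)\partial_t c - c\phi'(t)$, which at $c = \phi(t)/2$ simplifies to $-\partial_t c - \tfrac12\phi(t)\phi'(t)$. Since $\partial_t c = \la f^\perp, \partial_t\beta_t\ra$ can be arranged to cancel $\tfrac12\phi(t)\phi'(t)$ on a codimension-one subset of $f^\perp \in \beta_t^\perp$, the naive construction fails surjectivity on a codimension-two locus; the fallback---and what I expect to be the main technical work---is to enlarge $\E$ to $\R \times L^2(\R)$ and augment $h$ by a coupling term $y\phi(t)\beta_t$ in the $L^2$-component analogous to the third component of $\ti s$ in Lemma~\ref{lem:IFT2}, supplying an independent $\rd_y h$-direction proportional to $\phi(t)\beta_t$ that restores surjectivity without altering the zero set structure, using the same weighted Sobolev estimates already deployed in Lemma~\ref{lem:IFT2}.
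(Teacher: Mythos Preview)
Your construction is essentially the paper's: with $c := \langle f,\beta_t\rangle$, the paper takes $h_t(f) = f - \phi_t(c)\,\beta_t$ for $\phi_t(x) = x(1 - e^{-e^{1/t^2}} + x)$, which in your decomposition reads $h_t(f) = f^\perp + c\bigl(e^{-e^{1/t^2}} - c\bigr)\beta_t$ --- your formula up to a sign on the $\beta_t$-component, with the same $\phi(t) = e^{-e^{1/t^2}}$. The sc-smoothness argument, the branching zero set $z(t) = \phi(t)\beta_t$, the identification $\rd h_0 = \id_\E$, and the fibrewise surjectivity of $\rd h_t$ at zeros all match the paper. One technical correction: sc-smoothness of the undamped scalar $(t,f)\mapsto \langle f,\beta_t\rangle$ is not contained in Lemma~\ref{lem:IFT2} (which only treats $\phi(t)\langle f,\beta_t\rangle$); the paper proves it directly in this theorem via the weighted estimate $|\langle f,\beta_t\rangle| \le C\|f\|_{H^{0,\delta_1}} e^{-\delta_1 e^{1/t}}$ that you allude to.

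The substantive divergence is on the strongest claim, surjectivity of $\rd h(t,e)$ at \emph{every} $(t,e)$. The paper's proof checks only the special case $f = x_t\beta_t$ (i.e.\ $f^\perp = 0$) on the critical hyperplane $c = x_t := \tfrac12 e^{-e^{1/t^2}}$: there $\langle f,\partial_t\beta_t\rangle = x_t\langle\beta_t,\partial_t\beta_t\rangle = 0$, so the $\beta_t$-coefficient of $\partial_t h$ reduces to $(\partial_t\phi_t)(x_t)\neq 0$. Your objection is well-founded: for general $f = x_t\beta_t + f^\perp$ this coefficient is $(\partial_t\phi_t)(x_t) + \langle f^\perp,\partial_t\beta_t\rangle$ (in the paper's convention), and since $\partial_t\beta_t\in\beta_t^\perp$ is nonzero one can choose $f^\perp$ (even smooth with compact support) to make it vanish. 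So the paper's own construction does not literally satisfy surjectivity at every point, and its proof of that final sentence has the gap you identified. Your proposed remedy of enlarging $\E$ by an auxiliary $\R$-factor with a $y\phi(t)\beta_t$ coupling is a reasonable direction but is underspecified (the codomain must also enlarge for $\rd h_0$ to remain a sc-isomorphism, and the zero-set then needs rechecking); the paper does not pursue any such fix. For the core content --- branching zero set at a point where $\rd h_0$ is a sc-isomorphism, together with transversality along $h^{-1}(0)$ --- your argument and the paper's coincide.
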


\begin{proof} 
We modify the construction of Lemma~\ref{lem:IFT} by dropping the first component in the codomain and adjusting the second to
\begin{equation}\label{eq:construct}
h \,:\; \R\times L^2(\R) \;\to\; L^2(\R),  \quad (t, f) \;\mapsto\;  
h_t(f) :=
\begin{cases}
 f - \phi(t,\langle f , \beta_t \rangle)\, \beta_t  &\quad \text{for}\; t>0 ; \\
 f   &\quad\text{for}\; t\leq 0  ;
\end{cases}
\end{equation} 
for a smooth function $\phi: [0,\infty)\times\R\to \R, (t,x)\mapsto \phi_t(x)$.
The previous example is reproduced by $\phi(t,x)=x$, but for the present result we multiply this function with a $t$-dependent smooth function to obtain e.g.\ $\phi(t,x)=x (1 - e^{-e^{1/t^2}} + x)$. 
To prove sc-smoothness of $h$ -- using the same sc-structure on $\E= L^2(\R)$ as before -- we may subtract the identity on $L^2(\R)$ and consider the map
$$
\R\times\E \;\to\; \E,\qquad (t,f) \;\mapsto\;
h(t,f) - f \;=\; \psi(t,\la f , \beta_t\ra) \cdot \Phi(t,f) .
$$
Here $\Phi(t,f)\mapsto \la f , \beta_t\ra \; \beta_t$ for $t>0$ extends sc-smoothly to $\Phi(t,f)=0$ for $t\leq 0$ by \cite[Lemma~1.23]{MR2644764}, 
and $\psi:\R^2\to\R$ is some smooth function such as $(t,x)\mapsto1 - e^{-e^{1/t^2}} + x$. So by the product and chain rules in scale calculus \cite[\S1]{HWZbook} it remains to prove sc-smoothness of the function $\Psi:\R\times \E \to \R$ given by $\Psi(t,f)= \langle f , \beta_t \rangle$ for $t>0$ and $\Psi(t,f)=0$ for $t\leq 0$.
For $t\neq 0$ this map is smooth and thus sc-smooth. At $(0,f_0)\in\R\times L^2(\R)$ it is sc$^0$ because both terms in  
$$
\bigl| \Psi(t,f) - \Psi(0,f_0) \bigr| 
\;=\; \bigl| \la f , \beta_t \ra \bigr| 
\;\leq\; \|\beta\|_{\cC^0} \| f - f_0 \|_{L^2} + \bigl|\la f_0 , \beta_t \ra \bigr| 
$$
converge to $0$ as $(t,f)\to (0,f_0)$.
Scale differentiability is only required at $(0,f_0)\in\R\times H^{1,\delta_1}(\R)$ with $\delta_1>0$, where we estimate for $t>0$ 
$$
\bigl|  \Psi(t,f) - \Psi(0,f_0) \bigr|
\;=\; \bigl| \la f , \beta_t \ra \bigr| 
\;\leq\;  \| f\|_{H^{0,\delta_1}} \cdot 
\bigl(\tint e^{- 2\delta_1 |s-e^{1/t}| } \, \beta(s) \, \rd s \bigr)^{\frac 12}
\;\leq\;  C \| f\|_{H^{0,\delta_1}} e^{-\delta_1 e^{1/t} } .
$$
This shows differentiability with trivial differential $D\Psi(0,f_0)\equiv 0$ because $\lim_{t\to 0} t^{-1} e^{-\delta_1 e^{1/t} } =0$. Continuity of the differential then boils down to continuity of $(t,f)\mapsto - t^{-2} e^{1/t} \la f , \beta'(\cdot + e^{1/t}) \ra $ at $t=0$, and further differentiability uses analogous estimates with $\beta$ replaced by its (still smooth and compactly supported) derivatives. The required limits are
$\lim_{t\to 0} t^{-k} e^{\ell/t} e^{-\delta_1 e^{1/t} } =0 $ for $k,\ell\in\N$, which holds since for $x=\frac 1t \to \infty$ we know that $e^{\delta_1 e^x}$ grows faster than $x^k e^{\ell x}$.

This proves sc-smoothness of $h: \R\times \E \to \E$. Next, its partial differentials are $\rd h_t =\id$ for $t\leq 0$ but for $t>0$ we compute
$$
\rd h_t (f)\,:\; F \;\mapsto\; F -  \phi'_t(\langle f , \beta_t \rangle) \langle F , \beta_t \rangle \beta_t .
$$
Whenever $c:= \phi'_t(\langle f , \beta_t \rangle) \neq 1$ this is a sc-isomorphim on $\E$ with inverse $G\mapsto G - \frac{c}{c-1} \langle G , \beta_t \rangle \beta_t$, but for $c=1$ it is the projection to the orthogonal complement of $\R\beta_t$ with 1-dimensional kernel and cokernel. 
To find the zero set, we know $h_t^{-1}(0)=\{0\}$ for $t\leq 0$ and compute for $t>0$ 
$$
h_t(f) = 0 \quad\Leftrightarrow\quad  f = \phi_t(\langle f , \beta_t \rangle) \beta_t 
 \quad\Leftrightarrow\quad  f = x \beta_t , \; x = \phi_t(x)  \quad\Leftrightarrow\quad  f \in \{ 0 , e^{-e^{1/t^2}} \beta_t \} 
$$
since for our specific choice of the function $\phi$ we have
$$
x = x (1 - e^{-e^{1/t^2}} + x)  \quad\Leftrightarrow\quad x=0 \quad\text{or}\quad 1 = 1 - e^{-e^{1/t^2}} + x .
$$
This proves the first part of the theorem with $z(t)= e^{-e^{1/t^2}} \beta_t$ for $t>0$, which extends to a sc-smooth path $z:\R\to\E$ by $z(t)=0$ for $t\leq 0$ by classical smoothness of \eqref{eqn:g_0}.

To check transversality of $h$ and prove the final remark, we compute $\phi'_t(x) =  1 - e^{-e^{1/t^2}} + 2x$ 
so that $\phi'_t(x)=1 \Leftrightarrow x = \frac 12 e^{-e^{1/t^2}}$, 
and thus surjectivity of $\rd h_t(f)$ fails exactly for $t>0$ on the hyperplane $\langle f , \beta_t\rangle = \frac 12 e^{-e^{1/t^2}} = \langle \frac 12 z(t) , \beta_t\rangle$. This is the hyperplane through the mid-point $\frac 12 e^{-e^{1/t^2}} \beta_t$ on the line segment between the two zeros $0, z(t)=e^{-e^{1/t^2}}\beta_t$, and orthogonal to the line $\R\beta_t$ through them, so the hyperplane does not intersect the zero set, as claimed.
Moreover, although the differential $\rd h(t,f)$ is generally defined only at $(t,f)\in\R\times E_1$, our particular choice of function allows us to compute, at any $f\in L^2(\R)$ and obtain a prospective differential $\rd h(t,f): (T,F) \mapsto F$ for $t\leq 0$ and for $t>0$ with 
$x_t:=\langle f, \beta_t \rangle$,
$$
\rd h(t,f) : (T, F) \;\mapsto\; \rd h_t (f) F  - T \bigl( (\partial_t \phi_t)(x_t) \beta_t 
+ \phi'_t(x_t) \langle f , \partial_t\beta_t \rangle \beta_t
+  \phi_t(x_t) \partial_t\beta_t \bigr).
$$
To see that this map $\R\times L^2(\R) \to L^2(\R)$ is surjective, we consider an element $G\in L^2(\R)$ in the orthogonal complement to its image and aim to show that it must be zero. From the established properties of $\rd h_t$, the only case that remains to be considered is $t>0$, $f=x_t\beta_t$, $x_t=\frac 12 e^{-e^{1/t^2}}$, and $G\in\R\beta_t$. In that case we use the identity $2 \langle \beta_t , \partial_t \beta_t \rangle = \partial_t \|\beta_t\|^2 =0$ and compute $(\partial_t \phi_t)(x)= - \tfrac{2}{t^3} e^{1/t^2} e^{-e^{1/t^2}} x$ to obtain 
\begin{align*}
\langle \rd h(t,x_t\beta_t)(1,0) , G \rangle 
&= 
(\partial_t \phi_t)(x_t) \langle \beta_t ,  G \rangle 
+ \phi'_t(x_t)  \langle x_t \beta_t , \partial_t\beta_t \rangle  \langle \beta_t , G \rangle
+  \phi_t(x_t) \langle \partial_t\beta_t ,   G \rangle  \\
&= - \tfrac{2}{t^3} e^{1/t^2-e^{1/t^2}} \cdot \tfrac 12 e^{-e^{1/t^2}}   \langle \beta_t ,  G \rangle 
= -  \tfrac{1}{t^3} e^{1/t^2-2 e^{1/t^2}}  \langle \beta_t ,  G \rangle .
\end{align*}
This implies $\langle \beta_t ,  G \rangle=0$ and thus $G=0$, finishing the proof of surjectivity of $\rd h(t,f)$.
\end{proof}

To better understand the origin of these differences between classical and scale versions of calculus, note that the proofs of the Implicit and Inverse Function Theorems rely on surjectivity (and hence invertibility) of the differential persisting in a neighborhood as follows.

\medskip
\noindent
{\bf Openness of Transversality: } {\it Let $s: E \to F$ be a continuously differentiable Fredholm map between two Banach spaces $E,F$ whose differential $\rd s(0)$ is surjective. Then there is a neighborhood $U\subset E$ of $0$ such that $\rd s(p)$ is surjective for all $p\in U$. }

\medskip
The examples of Lemmas~\ref{lem:IFT}, \ref{lem:IFT2}, and Theorem~\ref{thm:IFT} also disprove the scale calculus version of this classical fact. 
In contrast with Corollary~\ref{cor} this also shows that these examples are not sc-Fredholm in the sense of \cite[Definition 3.7]{HWZbook}.

\begin{remark} \label{rmk:surj} 
{\it Lemma~\ref{lem:IFT} constructs $s$ on $\E=\R\times L^2(\R)$ so that $\rd s(t,f)$ is a sc-isomorphism for $t\leq 0$, but for $t>0$ has $1$-dimensional kernel and cokernel.
Lemma~\ref{lem:IFT2} constructs $\tilde s$ on $\F=\R\times\R\times L^2(\R)$ so that
$\pr_{\E}\circ \tilde s|_\E \simeq s$ on $\E\simeq\R\times\{0\}\times L^2(\R)$ recovers $s$ of Lemma~\ref{lem:IFT}.}

\rm
Indeed, the sc-smooth map $s: \R\times L^2(\R) \to \R\times L^2(\R)$ of Lemma~\ref{lem:IFT} has differential given by $\rd s(t,f)=\id$ for $t\leq 0$ but for $t>0$ we compute
$$
\rd s(t,f) : (T,F) \;\mapsto\;  \bigl( T , F -  \langle F , \beta_t \rangle \beta_t 
+ T \tfrac{ e^{1/t}}{t^2} ( \langle f , \beta'_t \rangle \beta_t + \langle f , \beta_t \rangle \beta'_t )  
\bigr).
$$
For $f=0$ the second component simplifies to the projection $F\mapsto F -  \langle F , \beta_t \rangle \beta_t$ to the orthogonal complement of $\R\beta_t$. Thus, $\rd s(t,0)$ is still a sc-Fredholm operator but has kernel $\{0\}\times\R\beta_t$ and cokernel $\F / \im \rd s(t,0) \simeq \R\beta_t$. For $f\neq 0$ a brief computation shows the same. 
The claim on Lemma~\ref{lem:IFT2} follows by setting $y=0$ in \eqref{eqn:map} and dropping the second component.

{\it Theorem~\ref{thm:IFT}, as established in the proof, constructs $h$ so that the partial differential $\rd h_t (e)$ is a sc-isomorphism whenever $t\leq 0$ or $t>0$ and $\langle e - \frac 12 z(t) , \beta_t \rangle \neq  0$. 
However, for $t>0$ on the hyperplane $\langle e - \frac 12  z(t) , \beta_t \rangle = 0$ through $\frac 12 z(t)$ orthogonal to $\R\beta_t$ the differential $\rd h_t (e)$  has $1$-dimensional kernel and cokernel.}

\rm
In fact, this failure of fiber-wise transversality of the sc-smooth family of maps $h_t:\E\to\E$ along some path $t\mapsto x_t\beta_t$ with limit $0\mapsto 0$ (in our case $x_t\beta_t=\frac 12 z(t)$) is a universal effect for any choice of the function $\phi_t$ in the construction \eqref{eq:construct} with branching zero set.
Indeed, with $\psi_t(x):=x-\phi_t(x)$ we have $h_t^{-1}(0) = \{ x | \psi_t(x) = 0 \} \beta_t$ and transversality fails at $\{ x | \psi_t'(x) = 0 \} \beta_t$. So, by the mean value theorem, there is fiber-restricted transversality failure between any two solutions on the line $\R\beta_t$.

While the total differential $\rd h$ is surjective everywhere in this example, it remains an open question whether there is a scale calculus counterexample to the implicit function theorem in which all partial differentials $\rd h_t$ are surjective in a neighborhood of a branching point.
\hfill$\square$
\end{remark}

Remark~\ref{rmk:surj} shows that Openness of Transversality does not hold for general sc-smooth maps.
On the other hand, Corollary~\ref{cor} below proves Openness of Transversality for sc-Fredholm maps -- based on continuity of the differential as an operator in specific coordinates established in Proposition~\ref{prop:cont-basic}. 
The difference between continuity of the differential required by sc-smoothness and continuity as an operator is illuminated in Remark~\ref{rmk:differentiability}.

\begin{remark} \label{rmk:differentiability} \rm
The map $h: \R \times L^2(\R) \to L^2(\R), (t,f) \mapsto f - \phi_t(\langle f , \beta_t \rangle) \beta_t$ with $\phi_t\equiv 0$ for $t\leq 0$, which appears in all counterexamples above for some choice of $\phi_t$, has a continuous tangent map
\begin{align*}
\rT h \, :\; \R \times L^2(\R) \times \R \times L^2(\R) \;\to\; L^2(\R) \times L^2(\R), \qquad
(t,f, T, F) \;\mapsto\; \bigl( h(t,f) , \rd h (t,f) (T,F) \bigr) 
\end{align*}
but the differential is discontinuous as a map to the Banach space of bounded operators
\begin{align*}
\rd h \, : \; \R \times L^2(\R)  \;\to \;\cL\bigl(\R \times L^2(\R) , L^2(\R) \bigr), \qquad
(t,f) \;\mapsto \;\rd h (t,f) .
\end{align*}
Explicitly, we can see that the difference of differentials,
$$
\rd h(t,0)-\rd h(0, 0) \; : \quad (T,F) \;\mapsto\; 
\begin{cases}
 - \phi_t'(0) \langle F , \beta_t \rangle \beta_t &\quad \text{for}\; t>0 ; \\
\quad 0  &\quad\text{for}\; t\leq 0,
\end{cases}
$$ 
converges pointwise to $(0,0)$ as $t\to 0$ since $\langle F , \beta_t \rangle \to 0$ for any fixed $F\in L^2(\R)$. However, the operator norm in $\cL(\R \times L^2(\R) , L^2(\R))$ is bounded below by $\|\rd h(0,0)-\rd h(t, 0)\| \geq \| \phi_t'(0) \langle \beta_t , \beta_t \rangle \beta_t \|_{L^2} = |\phi_t'(0) |\geq 1$ for every $t>0$ and both $\phi_t(x)=x$ and $\phi_t(x)=e^t x$. Here we used $F=\beta_t$ with $\|\beta_t\|_{L^2}=1$. The higher operator norms in the scale structure (whose specifics we do not discuss) are bounded analogously, 
$\|\rd h(0,0)-\rd h(t, 0)\|_{\cL(\R\times H^{i,\delta},H^{i,\delta})} \geq \|\beta_t\|_{H^{i,\delta}}^{-1} \| \phi_t'(0) \langle \beta_t , \beta_t \rangle \beta_t \|_{H^{i,\delta}} = | \phi_t'(0) | \frac{ \|\beta_t\|_{H^{i,\delta}}}{ \|\beta_t\|_{H^{i,\delta}}} \geq 1$. 
In comparison with Proposition~\ref{prop:cont-basic} this shows that $h$ is not equivalent to a basic germ at $(0,0)$ since otherwise Proposition~\ref{prop:cont-basic} would imply continuity of the differential as operator on level $i\geq 1$ for variations of the base point in $\R\times\{0\}$, which lies in the $\infty$-level of the sc-structure on $\R\times L^2(\R)$.

On the other hand, scale smoothness of $h$ requires continuity of the differential only in $\cL\bigl(\R \times H^{1,\delta}(\R) , L^2(\R) \bigr)$, where the Sobolov space 
$H^{1,\delta}(\R)= \{F:\R\to \R\,|\, e^{\delta |x|} F(x), e^{\delta |x|} F'(x) \in L^2\}$ carries a weight $\delta>0$. In that operator norm we have convergence
$\|\rd h(0,0)-\rd h(t, 0)\| \leq \sup_{\| F \|_{H^{1,\delta}}=1}|\phi_t'(0) | \| e^{\delta |x|} F(x) \|_{L^2} \| e^{-\delta |x|} \beta_t(x)\|_{L^2}  \leq e^{-\delta (e^{1/t}-1)}  |\phi_t'(0) |  \to 0$ as $t\searrow 0$.
\hfill$\square$
\end{remark}

\section{Continuity of differential for basic germs} \label{sec:cont}

The examples in \S\ref{sec:counterex} demonstrate that sc-smoothness and Fredholm linearizations are insufficient for an Inverse or Implicit Function Theorem. Instead, recall from \cite[Definitions~3.4--3.7]{HWZbook} that sc-Fredholm sections in polyfold theory are required to be locally equivalent to a {\em basic germ}. 
Here a section can be thought of (locally, and after a notion of filling) as map $s:\E\to\F$ between sc-Banach spaces, with the admissible changes of coordinates being governed by the bundle structure, which is specified for experts in a footnote. 

\begin{definition} \label{def:basic}
A sc-smooth map $s:\E\to\F$ is {\rm sc-Fredholm} at $e_0\in E_\infty$ if it is regularizing\footnote{The regularizing property requires $s^{-1}(F_i)\subset E_i$ for each $i\in\N$. This plays a minor but still necessary role in the proof of the Implicit Function Theorem of scale calculus.} and 
there is an admissible change of  coordinates that brings $s$ into the form of a basic germ at $0$.\footnote{Admissible changes of coordinates are given by a sc$^+$-section $U\to U\triangleleft\F, e\mapsto (e,s_0(e))$ with $s_0(e_0)=s(e_0)$ on a neighborhood $U\subset\E$ of $e_0$ and a strong bundle isomorphism $U\triangleleft\F \to V\triangleleft (\R^N\times\W), (e,f) \mapsto (\psi(e), \Psi_e f )$ covering a sc-diffeomorphism $\psi:U\to V\subset \R^k\times\W$ with $\psi(e_0)=0$. The result of this change of coordinates applied to a map $s:\E\to\F$ is the map
$f: V \to \R^N \times \W, v \mapsto \Psi_{\psi^{-1}(v)} ( s(\psi^{-1}(v))- s_0(\psi^{-1}(v)) )$.}
Such a {\em basic germ} is a sc-smooth map of the form
\begin{align}\label{eq:f}
f\,: \; \R^k \times \W \supset V \;\to\; \R^N \times \W , \qquad (c,w) \;\mapsto\; \bigl( a(c,w) , w - B(c,w) \bigr) ,
\end{align}
where the sc-smooth map $B:V \to \W$ is a contraction on all levels of $\W$, in the sense that 
for any $i\in\N_0$ and $\epsilon > 0$ there exists $\delta>0$ such that for $c\in\R^k$ and $w_1,w_2\in W_i$ with $|c|,|w_1|_i,|w_2|_i < \delta $ we have the {\it contraction property}
\begin{equation}\label{eqn:contract}
\| B(c,w_1) - B(c,w_2) \|_i \le \epsilon \|w_1-w_2\|_i  . 
\end{equation}
Recall here that the sc-space $\W=(W_i)_{i\in\N_0}$ consists of Banach spaces $W_i$ with norm $\|\cdot\|_i$ and 
compact embeddings $W_i \subset W_j$ for $i>j$ such that $W_\infty:= \bigcap_{i\in\N_0} W_i$ is dense in each $W_i$.
\end{definition}

The purpose of this section is to illuminate this nonlinear sc-Fredholm property by proving a continuity property of the differentials of a basic germ, which is implicit in various proofs of \cite{HWZbook}, and does not hold for general sc-smooth maps, as we show in \S\ref{sec:dis}.
Recall from Remark~\ref{rmk:sccalc} that general sc-smooth maps $s:\E\to\F$ restrict to continuously differentiable maps $E_{i+1}\to F_i$ and the differential is continuous as map $E_{i+1}\times E_i \to F_i, (e,X)\mapsto \rd s(e) X$. This can also be phrased as the differential forming a map $\rd s: E_{i+1} \to \cL(E_i,F_i)$; that is, the differential at any given base point $e\in E_{i+1}$ is an element of the vector space $\cL(E_i,F_i)$, which is defined to consist of bounded (i.e.\ continuous) linear operators such as $\rd s(e) : E_i \to F_i$. However, the differential as map that takes the base point $e$ to the linear operator $\rd s(e)$ may not be continuous in the operator norm on the vector space $\cL(E_i,F_i)$; see \cite[Remark~1.1]{HWZbook}. That is -- as in the previous examples of \S\ref{sec:counterex} by Remark~\ref{rmk:differentiability} -- we cannot generally guarantee 
$\| \rd s(e+h) - \rd s(e) \| = \sup_{\|X\|_{E_i}=1} \| \rd s(e+h)X - \rd s(e)X \|_{F_i} \to 0$ as $\|h\|_{E_{i+1}}\to 0$. 
However, the following proposition establishes this type of continuity at $e=0$ if $s=f$ is a basic germ.

\begin{proposition}\label{prop:cont-basic} 
Let $f$ be a basic germ as in \eqref{eq:f}. Then for every 
$i\in\N$ the differential
$$
\rd f:\R^k \times W_{i+1} \;\to\; \cL(\R^k\times W_i, \R^N \times W_i), \qquad (c,w) \;\mapsto\; \rd f (c,w)
$$ 
is continuous at $(0,0)$ with respect to the operator norm on $\cL(\ldots)$.
In fact, the partial differential in the directions of  $\W$, 
$$
\rd_\W f:\R^k \times W_{i+1} \;\to\; \cL(W_i, \R^N \times W_i), \qquad (c,w) \;\mapsto\; \rd f (c,\cdot)|_w
$$ 
is continuous at $(0,0)$ with respect to the $W_i$-topology on $W_{i+1}$.
\end{proposition}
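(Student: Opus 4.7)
The plan is to establish the stronger ``In fact'' part about $\rd_\W f$ (with the $W_i$-topology on the base) and then derive the full $\rd f$ statement by handling the $\R^k$-directional derivatives separately. The latter is automatic from sc-smoothness: since $\R^k$ is finite-dimensional, the operator norm on $\cL(\R^k, X) \cong X^k$ is equivalent to the maximum of the coordinate norms, so operator-norm continuity of $\rd_\R a \in \cL(\R^k, \R^N)$ and $\rd_\R B \in \cL(\R^k, W_i)$ reduces to continuity of the individual partial derivatives $\partial_{c_j} a$ and $\partial_{c_j} B$, which follows immediately from sc-continuity of $\rd a$ and $\rd B$. Combined with the fact that the $W_{i+1}$-topology on the base refines the $W_i$-topology, it therefore suffices to prove operator-norm continuity of the two maps $\rd_\W B : \R^k \times W_{i+1} \to \cL(W_i, W_i)$ and $\rd_\W a : \R^k \times W_{i+1} \to \cL(W_i, \R^N)$ at $(0,0)$ with the $W_i$-topology on the base.

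The bound on $\rd_\W B$ follows directly from the contraction property \eqref{eqn:contract}. Given $\ep > 0$, pick $\delta > 0$ as in that property. For a base point $(c, w) \in \R^k \times W_{i+1}$ in a sufficiently small $W_i$-neighborhood of $(0,0)$ and any direction $W \in W_{i+1}$, the classical difference quotient $t^{-1}(B(c, w + tW) - B(c, w))$ is bounded by $\ep \|W\|_i$ in $W_i$-norm for small $t$, so its limit satisfies $\|\rd_\W B(c, w) W\|_i \le \ep \|W\|_i$. Density of $W_{i+1}$ in $W_i$, together with continuity of the bounded operator $\rd_\W B(c, w) \in \cL(W_i, W_i)$ guaranteed by sc-smoothness, extends this bound to all $W \in W_i$, yielding $\|\rd_\W B(c, w)\|_{\cL(W_i, W_i)} \le \ep$ throughout a $W_i$-neighborhood of $(0,0)$. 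In particular $\rd_\W B(0, 0) = 0$, and continuity at $(0, 0)$ follows.

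For $\rd_\W a$ the contraction argument is unavailable, and I expect this to be the main obstacle, since operator-norm continuity is a subtle strengthening of the pointwise continuity provided by sc-smoothness. The plan is to exploit the compact embedding $W_i \hookrightarrow W_{i-1}$, available since $i \ge 1$, together with sc-smoothness of $a$ one level lower. Arguing by contradiction, suppose there exist $\eta > 0$, a sequence $(c_n, w_n) \in \R^k \times W_{i+1}$ with $(c_n, w_n) \to (0, 0)$ in $W_i$-norm, and unit vectors $W_n \in W_i$ satisfying $|\rd_\W a(c_n, w_n) W_n - \rd_\W a(0, 0) W_n| \ge \eta$. Compactness yields a subsequence along which $W_n \to W^*$ in $W_{i-1}$. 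The sc-smoothness of $a$ at level $i-1$ provides joint continuity of the evaluation map $(c, w, W) \mapsto \rd_\W a(c, w) W$ from $\R^k \times W_i \times W_{i-1}$ to $\R^N$, so $\rd_\W a(c_n, w_n) W_n \to \rd_\W a(0, 0) W^*$; continuity of the bounded operator $\rd_\W a(0, 0) : W_{i-1} \to \R^N$ gives $\rd_\W a(0, 0) W_n \to \rd_\W a(0, 0) W^*$. Subtracting these two limits produces the desired contradiction.
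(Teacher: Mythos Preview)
Your proof is correct and essentially parallels the paper's, with one genuine difference in how the $\R^N$-component $a$ is handled. The paper disposes of $a$ in one stroke by citing \cite[Proposition~1.7]{HWZbook}, which asserts that a sc-smooth map into a finite-dimensional target is classically $C^1$ on every level $\R^k\times W_i\to\R^N$ for $i\ge 1$; this immediately gives operator-norm continuity of the full $\rd a$ (and hence of $\rd_\W a$) in the $W_i$-topology. You instead isolate $\rd_\W a$ and prove its continuity directly via the compact embedding $W_i\hookrightarrow W_{i-1}$ and the level-$(i-1)$ joint continuity of the tangent map. Your argument is essentially an inline proof of the special case of \cite[Proposition~1.7]{HWZbook} that is actually needed here, so it is more self-contained; the paper's route is shorter but relies on the black box. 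For $\rd_\W B$ your direct passage to the limit of difference quotients is the same idea as the paper's contradiction argument (and in fact yields the cleaner bound $\epsilon$ rather than $2\epsilon$), and your treatment of the $\R^k$-directional derivatives via finite-dimensionality of the domain matches the paper's exactly.
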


\begin{proof}
First note that ${\rm pr}_{\R^N}\circ f=a:\R^k \times \W \to \R^N$ is a sc-smooth map with finite dimensional codomain $\F=\R^N$, so for any $i\geq 1$ it restricts to a continuously differentiable map $a:\R^k\times W_i \to \R^N$ by \cite[Proposition~1.7]{HWZbook} and triviality of the sc-structure $F_i=\R^N$ from \cite[p.4]{HWZbook}. Therefore ${\rm pr}_{\R^N}\circ \rd f = \rd a : \R^k \times W_i \to \cL(\R^k\times W_i, \R^N)$ is continuous at $(0,0)$ for $i\ge 1$. 
Now the composition of this map with the inclusion $W_{i+1}\to W_i$ yields continuity of 
${\rm pr}_{\R^N}\circ \rd f = \rd a : \R^k \times W_{i+1} \to \cL(\R^k\times W_i, \R^N)$ at $(0,0)$ for $i\geq 1$ with respect to both the $W_{i+1}$-topology and the $W_i$-topology on $W_{i+1}$.

Next, the linear map $(c,w)\mapsto w$ in the second component of $f$ has differential ${\rm pr}_{\W}$, which restricts to the bounded projections $\R^k\times W_i \to W_i$ and does not vary with the base point. Thus, the crucial step for this proof is to show continuity of $\rd B$ at $(0,0)$. 
Sc-differentiability of $B:\R^k\times\W\to\W$, by \cite[Proposition~1.5]{HWZbook} can be split up into existence of partial differentials 
$\rd_{\R^k} B(c,w) : \R^k \to W_0$ and $\rd_\W B(c,w) : W_0 \to W_0$ for $(c,w)\in \R^k\times W_1$, which for $w\in W_{i+1}$ restrict to bounded operators in $\cL(\R^k,W_i)$ resp.\ $\cL(W_i,W_i)$, such that the shifted difference quotients converge,
$$
\lim_{\|(d,h)\|_{\R^k\times W_{i+1}} \to 0} \frac{\| B(c+d,w+h) - B(c, w) - \rd_{\R^k} B(c,w) d - \rd_\W B(c,w) h \|_{W_i}}{\|(d,h)\|_{\R^k\times W_{i+1}}} \;=\; 0 ,
$$
and $(c,w,d) \mapsto \rd_{\R^k} B(c,w) d$ restricts to continuous maps $\R^k\times W_{i+1}\times \R^k \to W_i$, 
as well as $(c,w,h) \mapsto \rd_\W B(c,w) h$ restricts to continuous maps $\R^k\times W_{i+1}\times W_i \to W_i$ for every $i\geq 0$.
For the first component of the differential, $\rd_{\R^k} B$, the vector-wise continuity implies continuity of 
$(c,w) \to \rd_{\R^k} B (c,w)$ in the operator topology $\R^k\times W_{i+1}\to \cL( \R^k ,W_i)$ since the domain $\R^k$ of the bounded operators is finite dimensional.
To show the continuity of $(c,w) \to \rd_{\W} B (c,w)$ in the operator topology $\R^k\times W_{i+1}\to \cL(W_i,W_i)$ at $(0,0)$, recall that, given $\epsilon > 0$, the contraction property \eqref{eqn:contract} provides $\delta>0$ so that $\|B(c,w_1) - B(c,w_2)\|_i < \epsilon \|w_1-w_2\|_i$ whenever $|c|,\|w_1\|_i, \|w_2\|_i < \delta$. We claim that this implies $\|\rd_\W B(c,w)\|_{\cL(W_i,W_i)}\le 2\epsilon$ for $w\in \W_\infty$ with $\|w\|_i<\delta$. 
Indeed, assume by contradiction $\|\rd_\W B(c,w) h\|_i > 2 \epsilon \|h\|_i $ for some $h\in W_i$. 
Since $W_{i+1}$ is dense in $W_i$ and $\rd_\W B(c,w)$ is continuous, we can find a nearby $h\in W_{i+1}$ that satisfies the same inequality.
Then for $t>0$ sufficiently small such that $\|w+th\|_i, \|w\|_i <\delta$ we can bound the shifted difference quotient
\begin{align*}
 \frac{\| B(c,w+th) - B(c,w) - \rd_\W B(c,w) th \|_i}{\|th\|_{i+1}} 
&\ge 
\frac{ t \| \rd_\W B(c,w) h \|_i - \| B(c,w+th) - B(c,w) \|_i }{\|th\|_{i+1}} \\
&\ge 
\frac{2 t \epsilon \|h\|_i - \epsilon \| w+th - w \|_i }{\|th\|_{i+1}} 
\;=\;
\frac{\epsilon \|h\|_i}{\|h\|_{i+1}} \;>\; 0 .
\end{align*}
This contradicts the above condition of sc-differentiability for $d=0$ and $t\to 0$.
Thus, given any $\epsilon > 0$ we found $\delta>0$ so that $\|\rd_\W B(c,w)\|_{\cL(W_i,W_i)}\le 2\epsilon$ for $w\in \W_{i+1}$ with $\|w\|_i<\delta$.
Therefore $\rd_\W B$ is continuous at $(0,0)$ not just in the natural topology on $W_{i+1}$ but even in the coarser topology induced by the embedding $W_{i+1}\subset W_i$. The same is true for ${\rm pr}_{\R^N}\circ \rd f$ with $i\geq1$, which proves the claimed continuity of $\rd_\W f$. 
For $\rd_{\R^k} B$, the scale differentiability yields continuity only in the topology of $W_{i+1}$, so the overall differential $df$ is continuous at $(0,0)$ in the $W_{i+1}$-topology.
\end{proof}

Unfortunately, Proposition~\ref{prop:cont-basic} does not prove continuity of the differential as operator for general sc-Fredholm maps, since a change of coordinates by a nonlinear sc-diffeomorphism of the domain does not generally preserve continuity of the differential, as shown in \S\ref{sec:dis}.
In applications, we do expect sc-Fredholm maps such as the Cauchy-Riemann operator in \cite{MR3683060} to have continuous differentials, as the changes of coordinates in practice are linear -- arising from splitting off kernel and cokernel of linearized operators.
However, we deduce from Proposition~\ref{prop:cont-basic} that any property which (i) follows from continuity of the differential in the operator norm, and (ii) is preserved under admissible changes of coordinates, must also hold for sc-Fredholm maps.
This proves the following scale calculus analogues of ``Openness of Transversality" and ``Openness of isomorphic differentials.'' Here we also note the full polyfold theoretic version of this result in the language of \cite{HWZbook}. 

\begin{corollary} \label{cor}
Let $s:\E\to \F$ be sc-Fredholm in the sense of Definition~\ref{def:basic} at every $e_0\in E_\infty$. Then for any $i\in\N$ the following subsets of $E_\infty$ are open with respect to the $E_{i+1}$-topology,
$$
\{e\in E_\infty \,|\, \rd s(e)(E_i)=F_i\}, \qquad\qquad
\{e\in E_\infty \,|\, \rd s(e):E_i \to F_i \text{ is a sc-isomorphism}\}.
$$
Let $\s:\cX\to \cY$ be a sc-Fredholm section of a strong bundle $P:\cY\to\cX$. Then, given any local trivialization $P^{-1}(\cU) \simeq K \subset \E\triangleleft\F$ over an open subset $\cU\subset\cX$, the following subsets of   $\cU_\infty=\cU\cap \cX_\infty$ are open with respect to the $\cX_{i+1}$-topology for any $i\in\N$,
$$
\{x\in \cU_\infty \,|\, \rD\s(x)(\rT_x\cX_i)=(\cY_x)_i\} , \qquad
\{x\in \cU_\infty \,|\, \rD\s(x):\rT_x\cX_i \to(\cY_x)_i \text{ is a sc-isomorphism}\} .
$$
Here $(\cY_x)_i$ is the $i$-th scale of the fiber $\cY_x:=P^{-1}(x)$, and the linearizations $\rD\s(x)$ are determined by the choice of local trivialization. 
\end{corollary}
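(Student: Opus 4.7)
The plan is to deduce openness from Proposition~\ref{prop:cont-basic} together with two classical Banach space facts: in $\cL(E_i,F_i)$, the subsets of surjective Fredholm operators and of bijective bounded operators are each open in the operator-norm topology. The former follows from the Fredholm index being locally constant and the cokernel dimension upper semicontinuous under small operator-norm perturbations, and the latter follows from a Neumann series argument.

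To prove the first statement, fix $e_0\in E_\infty$ with $\rd s(e_0):E_i\to F_i$ surjective (respectively an sc-isomorphism). By Definition~\ref{def:basic}, the sc-Fredholm hypothesis provides an admissible change of coordinates $(\psi,\Psi,s_0)$ bringing $s$ into the form of a basic germ $f:V\to\R^N\times\W$ at $0=\psi(e_0)$. I would first check that this change of coordinates preserves the differential property in question, in the sense that surjectivity (respectively sc-isomorphism) of $\rd s(e):E_i\to F_i$ at a nearby $e$ is equivalent to the same property for $\rd f(\psi(e)):\R^k\times W_i\to\R^N\times W_i$. Then, by Proposition~\ref{prop:cont-basic}, the differential $\rd f$ is continuous at $(0,0)$ in the operator-norm topology of $\cL(\R^k\times W_i,\R^N\times W_i)$ when the base point varies in the $W_{i+1}$-topology. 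Combined with the classical operator-norm openness, this yields a $W_{i+1}$-neighborhood $\cN$ of $0$ in $V\cap(\R^k\times W_\infty)$ on which $\rd f$ retains the property. Since $\psi$ is a sc-diffeomorphism, it restricts to a homeomorphism from some $E_{i+1}$-neighborhood of $e_0$ in $E_\infty$ onto $\cN$, and pulling back completes the proof.

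The hard part will be the preservation step, since the admissible change of coordinates involves subtracting an sc$^+$-section $s_0$ whose differential $\rd s_0(e)$ is compact, together with the point-dependence of $\Psi_e$. A chain-rule computation relates $\rd f(\psi(e))$ and $\rd s(e)$ by conjugation with sc-isomorphisms up to a compact correction that vanishes at $e_0$. One must then carefully argue that the finite-dimensional kernel and cokernel contributions agree on both sides in a neighborhood of $e_0$, so that surjectivity and sc-isomorphism are intrinsic to the admissible-coordinates equivalence class, as implicitly used throughout \cite{HWZbook}.

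Finally, the bundle version reduces to the first part: reading $\s$ in the given local trivialization $P^{-1}(\cU)\simeq K\subset\E\triangleleft\F$ expresses $\s$ locally as a sc-Fredholm map $s:\E\to\F$, with $\rD\s(x)$ corresponding to $\rd s(e)$ on each scale. Since the local trivialization is sc-diffeomorphic on every level, openness in the $E_{i+1}$-topology on the trivialized domain translates to openness in the $\cX_{i+1}$-topology on $\cU_\infty$.
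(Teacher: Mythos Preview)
Your proposal is correct and follows essentially the same approach as the paper, which in fact gives no formal proof of the corollary but only the sentence preceding it: ``any property which (i) follows from continuity of the differential in the operator norm, and (ii) is preserved under admissible changes of coordinates, must also hold for sc-Fredholm maps.'' Your plan unpacks exactly these two ingredients---operator-norm openness of surjective Fredholm and invertible operators for (i), and the chain-rule/compact-correction argument for (ii)---and then invokes Proposition~\ref{prop:cont-basic}, so there is nothing to add.
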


\section{Discontinuity of differential for sc-diffeomorphisms} \label{sec:dis}

The purpose of this section is to show that sc-diffeomorphisms  -- in contrast to the basic germs in \S\ref{sec:cont} -- can have discontinuous differential, viewed as a map to the space of bounded linear operators as in Proposition~\ref{prop:cont-basic}.

\begin{theorem}\label{thm:discontinuous}
There exists a sc-diffeomorphism $s: \F \to \F$ on a sc-Banach space $\F=(F_i)_{i\in\N_0}$, whose differential $\rd s: F_{i+1} \to \cL(F_i, F_i)$ is discontinuous for any scale $i\in\N_0$.
\end{theorem}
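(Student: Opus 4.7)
The plan is to construct $s$ as a ``sc-smooth reflection,'' obtained by doubling the perturbation used in Lemma~\ref{lem:IFT}. Concretely, I would take the same sc-Banach space $\F = \R \times L^2(\R)$ as in \S\ref{sec:counterex}, with scale structure $F_i = \R \times H^{i,\delta_i}(\R)$ for $0 = \delta_0 < \delta_1 < \ldots$, and define
$$
s(t, f) \;:=\; (t, f) - 2\bigl(\rho(t, f) - (t, 0)\bigr) \;=\; \begin{cases} \bigl(t,\, f - 2\langle f, \beta_t\rangle \beta_t\bigr) & \text{for } t > 0, \\ (t, f) & \text{for } t \leq 0, \end{cases}
$$
with $\rho$ the sc-smooth retraction from \cite[Lemma~1.23]{MR2644764}. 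Sc-smoothness of $s$ then follows immediately from sc-smoothness of $\rho$ together with the linear (hence sc-smooth) maps $(t,f) \mapsto (t,f)$ and $(t,f) \mapsto (t, 0)$.

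Next I would verify that $s$ is an involution, hence a sc-diffeomorphism with sc-smooth inverse $s^{-1} = s$. For each fixed $t > 0$, the fiber map $f \mapsto f - 2\langle f, \beta_t\rangle \beta_t$ is the orthogonal reflection of $L^2(\R)$ across the hyperplane $(\R\beta_t)^\perp$, and since $\langle \beta_t, \beta_t\rangle = \|\beta\|_{L^2}^2 = 1$ a direct computation confirms $s \circ s = \id_\F$.

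The key step is to compute $\rd s$ at the base points $(t, 0)$ and bound the operator-norm difference from below uniformly in $t$. Since the perturbation $-2\langle f, \beta_t\rangle \beta_t$ and all its $t$-derivatives vanish along the slice $\{f = 0\}$, we have
$$
\rd s(t, 0)(T, F) \;=\; \bigl(T,\, F - 2\langle F, \beta_t\rangle \beta_t\bigr) \quad \text{for } t > 0, \qquad \rd s(0, 0) \;=\; \id.
$$
Given any $i \in \N_0$, I would test the difference $\rd s(t, 0) - \rd s(0, 0)$ on the unit vector $(0, F_t) \in F_i$ with $F_t := \beta_t / \|\beta_t\|_{H^{i,\delta_i}}$, which lies in $F_i$ since $\beta_t \in \cC_0^\infty(\R)$. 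Using $\langle F_t, \beta_t\rangle = 1/\|\beta_t\|_{H^{i,\delta_i}}$, the image is $-2 \beta_t / \|\beta_t\|_{H^{i,\delta_i}}$, which has $H^{i,\delta_i}$-norm exactly $2$. This forces $\|\rd s(t, 0) - \rd s(0, 0)\|_{\cL(F_i, F_i)} \geq 2$ for every $t > 0$, and because $(t, 0) \to (0, 0)$ in the $F_{i+1}$-topology as $t \searrow 0$, the map $\rd s : F_{i+1} \to \cL(F_i, F_i)$ is discontinuous at $(0,0) \in F_{i+1}$ for every $i \in \N_0$.

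There is no serious obstacle: the sc-smoothness of $s$ is inherited from the (nontrivial but already established) sc-smoothness of $\rho$, the involution property is an algebraic identity, and the uniform lower bound on the operator norm is achieved by the single test vector $F_t = \beta_t/\|\beta_t\|_{H^{i,\delta_i}}$ that rescales $\beta_t$ to the unit sphere of each scale.
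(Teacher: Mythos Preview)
Your argument is correct and considerably simpler than the paper's own proof. The paper builds an entirely new example on a different sc-Banach space: it takes $\E$ modeled on $(H^{3i}(S^1))_{i\in\N_0}$ with an explicit orthogonal basis $(e_n)$, and defines $s(t,\sum x_n e_n)=(t,\sum f_n(t)x_n e_n)$ for a family of scalar functions $f_n:\R\to[\tfrac12,1]$ that switch from $1$ to $\tfrac12$ on the disjoint intervals $(\tfrac1{n+1},\tfrac1n)$. Sc-smoothness of this map (and of its inverse, obtained by replacing $f_n$ with $1/f_n$) is then established from scratch via a fairly lengthy inductive analysis of the auxiliary maps $\rho_k(t,x)=\sum f_n^{(k)}(t)p_n(x)$.

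Your construction instead recycles the retraction $\rho$ from \S\ref{sec:counterex}: replacing the orthogonal \emph{projection} $f\mapsto f-\langle f,\beta_t\rangle\beta_t$ of Lemma~\ref{lem:IFT} by the orthogonal \emph{reflection} $f\mapsto f-2\langle f,\beta_t\rangle\beta_t$ makes the fiber maps involutive, so $s^{-1}=s$ and sc-smoothness of the inverse is free. The discontinuity computation is then essentially the one already carried out in Remark~\ref{rmk:differentiability} (with $\phi_t'(0)=2$ in place of $1$). What the paper's approach buys is an example where each $s_t$ is diagonal and globally bounded between $\tfrac12\,\id$ and $\id$, which may be conceptually cleaner as a ``pure'' sc phenomenon not tied to the specific retraction $\rho$; what your approach buys is a two-line proof that directly leverages the machinery already set up in \S\ref{sec:counterex}.
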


The map $s:\F\to\F$ in Theorem~\ref{thm:discontinuous} is also an example of a sc-Fredholm map with discontinuous differential, since $s$ is equivalent, via the sc-diffeomorphism $s$, to the identity map $\id_\F$, which is a basic germ (as it satisfies Definition~\ref{def:basic} with $\W=\F$, $k=N=0$, and $B\equiv 0$).

\begin{remark} \label{rmk:scdiffdfn}
A sc-diffeomorphism is defined \cite[p.12]{MR2341834} to be a homeomorphism $f:U\to V$ between open subsets $U\subset\E, V\subset \F$ of sc-Banach spaces, such that both $f$ and $f^{-1}$ are sc-smooth. 
It then follows that the differential
$\rd_u f := \rd f (u) : E_k \to F_k$ is an isomorphism on scale $k\in\N_0$ at base points $u\in U\cap E_{k+1}$. In particular, $\rd f (u): \E \to \F$ is a sc-isomorphism for $u\in U\cap E_\infty$. 

Indeed, the chain rule \cite[Theorem~1.1]{HWZbook} applied to the identities
$g \circ f = \id_U$ and $f \circ g = \id_V$ for $g:=f^{-1}$
yields
$\rd_{f(u)} g \circ \rd_u f = \id_{E_k}$ for $u\in E_{k+1}$
and 
$\rd_{u} f \circ \rd_{f(u)} g = \id_{F_k}$ for $f(u)\in F_{k+1}$.
Here $f(u)\in F_{k+1}$ follows by sc-continuity of $f$ from $u\in E_{k+1}$.
\hfill$\square$
\end{remark}

To construct the example in Theorem~\ref{thm:discontinuous}, we work with an abstract model for the sc-Banach space $\E=(H^{3i}(S^1))_{i\in\N_0}$.
For that purpose we start with an infinite dimensional vector space $$
\textstyle
E:=\bigl\{ \sum_{n=1}^N x_n e_n \,\big|\, N\in\N, x_1,\ldots,x_N \in\R \bigr\}
$$
generated by a sequence of formal variables $(e_n)_{n \in \N}$. 
We obtain norms $\|x\|_i:= \sqrt{\la x, x \ra_i}$ on $E$ by defining inner products with $\la e_n, e_m\ra_i := (nm)^{3i}\delta_{n,m}$. 
Then each completion of $E$ in a norm $\|\cdot\|_i$ defines a Banach space $E_i:= \overline{E}^{\|\cdot\|_i}$, and the embeddings $E_{i+1}\subset E_i$ are compact so that $\E := (E_i)_{i\in\N_0}$ is a sc-Banach space. 
(This follows from the compact Sobolev embeddings $H^{3i}(S^1)\hookrightarrow H^{3j}(S^1)$ for $i>j$. Here an explicit sc-isomorphism $E_0\simeq H^0(S^1)$ mapping $E_i$ to $H^{3i}(S^1)$ can be obtained by taking real and imaginary parts of the complex orthogonal basis $(e^{\sqrt{-1}k\theta})_{k\in\N_0}$ of $L^2(S^1)=H^0(S^1)$ and normalizing these real valued functions to obtain a collection of smooth functions $(e_n)_{n\in\N}\subset \cC^\infty(S^1)=\bigcap_{i\in\N_0} H^{3i}(S^1)$ that have inner products $\la e_n, e_m\ra_{H^{3i}} := n^{6i}\delta_{n,m}$. 
Thus they form an orthonormal basis of $H^0(S^1)$ and the $\|\cdot\|_i$ closure of the finite span $E\hookrightarrow H^0(S^1)$ exactly corresponds to the subspace $H^{3i}(S^1)\subset H^0(S^1)$.)

\begin{proof}[Proof of Theorem~\ref{thm:discontinuous}]
We construct a map $s:\F\to\F$ on $\F:=\R\times\E$ by 
$$ \textstyle
s \,:\; (t, x) \;\mapsto\;  (t, s_t(x)) ,
\qquad 
s_t\bigl(\sum_{n=0}^\infty x_n e_n\bigr) :=  \sum_{n=0}^\infty f_n(t)  x_n e_n 
$$
for a sequence of smooth functions $f_n:\R\to [\frac 12, 1], t\mapsto f\bigl(\frac 12(n(n+1)t + 1 - n )\bigr)$ obtained by reparameterizing a smooth function $f:\R\to[\frac 12,1]$ chosen with $f|_{(-\infty,\frac{1}{2}]} \equiv 1$, 
$f|_{[1,\infty)} \equiv \tfrac{1}{2}$, and ${\rm supp}f' \subset (\frac 12 , 1)$. 
First note that by construction we have $f_n|_{(-\infty,\frac{1}{n+1}]} \equiv 1$
and $f_n|_{[\frac{1}{n},\infty)} \equiv \tfrac{1}{2}$.
So the family of linear maps $s_t$ restricts to $s_t= \id_\E$ for $t\leq 0$ and $s_t|_{\E_N}=\frac 12 \id_{\E_N}$ on $\E_N:={\rm span}\{e_n | n\geq N\}$ for $t\geq \frac 1N$.
Thus, $\rd s: \R\times E_{i+1} \to \cL(\R \times E_i, \R\times E_i)$ cannot be continuous for any $i\in\N_0$ since $\rd s (t,x)|_{\{0\}\times E_i}: (0, X) \mapsto (0, s_t(X))$ is discontinuous at $t=0$ in $\cL(E_i, E_i)$ by 
$$
\|s_{1/n}- s_0\|_{\cL(E_i, E_i)} \geq
\|s_{1/n}(e_n)- s_0(e_n)\|_i \|e_n\|_i^{-1}
= \|\tfrac 12 e_n - e_n \|_i \|e_n\|_i^{-1} = \tfrac 12 .
$$
On the other hand, since $f_n(t)\neq 0$, the map $s$ has an evident inverse given by
$$ \textstyle
s^{-1} \,: \; \bigl(t, \sum_{n=0}^\infty y_n e_n\bigr) \;\mapsto\; \bigl(t, \sum_{n=0}^\infty \tfrac{y_n}{f_n(t)} e_n \bigr) .
$$
To prove the theorem it remains to show that $s$ and $s^{-1}$ are well-defined and sc-smooth. For that purpose note that $s^{-1}$ is of the same form as $s$, with the function $f$ replaced by $\frac 1{f}$. 
So it suffices to consider the map $s$, as long as we only use common properties of the functions $f_n$ in both cases. 
Since ${\rm supp}f_1' \subset (\frac 12 , 1)$ and
the derivatives of $f_1=f$ and $f_1=f^{-1}$ are uniformly bounded, we have for all $n\in\N$
\begin{equation}\label{fest}
\supp f_n^{(k)} \subset \bigl(\tfrac{1}{n+1},\tfrac{1}{n}\bigr) \quad\forall k\geq 1,
\qquad
\bigl\|f^{(k)}_n\bigr\|_\infty = \bigl(\tfrac{n(n+1)}{2}\bigr)^k \bigl\|f^{(k)}_1\bigr\|_\infty \leq n^{2k} C_k \quad\forall k\geq 0.
\end{equation}
Next, we write $s(t,x)=(t,\rho_0(t,x))$ and -- to prove that $\rho_0:\R\times\E\to\E$ and thus $s$ is well-defined and sc-smooth -- we more generally study the maps arising from the derivatives $f^{(k)}_n=\frac{\rd^k}{\rd t^k} f_n$ on shifted sc-spaces $\E^k:=(E_{k+i})_{i\in\N_0}$ for $k\in\N_0$,
$$\textstyle
\rho_k \,:\; \R \times \E^k \;\to\; \E, 
\qquad 
\bigl(t,\sum_{n=0}^\infty x_n e_n\bigr) \;\mapsto\; \sum_{n=0}^\infty f^{(k)}_n(t) x_n e_n .
$$
We can rewrite this
$\rho_k(t,\cdot) = \sum_{n=0}^\infty f^{(k)}_n(t) p_n$ in terms 
of the orthogonal projections to $\R e_n \subset E_0$, 
$$ \textstyle
 p_n \,:\; \E\;\to\;\E, \qquad x \;\mapsto\; \langle x , e_n \rangle_0 \, e_n .
$$
Then for $k\geq 1$ the supports of $f^{(k)}_n$ are disjoint, so we have
$\rho_k (t, \cdot ) = f^{(k)}_{N_t}(t) p_{N_t}$ with $N_t:=\lfloor t^{-1} \rfloor$ for $t>0$ and $\rho_k(t, \cdot)\equiv 0$ for $t\leq 0$ as well as in a small neighborhood $t\sim \frac 1n$ for each $n\in\N$.
Note also for future purposes the estimates for $x\in E_{i+k}$ and $k\geq 0$,
\begin{align}
&\| p_n(x) \|_i  \label{pnik}
\;=\; \bigl| \la x , e_n \ra_0 \bigr| \tfrac{\| e_n \|_i}{\| e_n \|_{i+k}} \|e_n\|_{i+k} 
\;=\; n^{-3k}  \|  \la x , e_n \ra_0 e_n\|_{i+k}
\;=\; n^{-3k} \| p_n(x) \|_{i+k} , \\
&\textstyle \label{pnsum}
\bigl\| \sum_{n=N}^\infty p_n(x) \bigr\|_i
\;=\; \bigl( \sum_{n=N}^\infty \| p_n(x) \|_i^2 \bigr)^{1/2}
\;=\; \bigl( \sum_{n=N}^\infty  n^{-6k} \| p_n(x) \|_{i+k}^2 \bigr)^{1/2}\\
&\qquad\;\;\; \textstyle
\;\leq\;  N^{-3k} \bigl( \sum_{n=0}^\infty  \| p_n(x) \|_{i+k}^2 \bigr)^{1/2}
\;=\;  N^{-3k} \bigl\| \sum_{n=0}^\infty p_n(x) \bigr\|_{i+k}
\;=\;  N^{-3k} \| x\|_{i+k}.
\nonumber
\end{align}
We will show for all $k\in\N_0$ that $\rho_k: \R \times \E^k \to \E$ is well-defined, sc$^0$, and sc-differentiable with tangent map $\rT\rho_k=(\rho_k, \rD\rho_k): \R\times \E^{k+1} \times \R \times \E^k \to \E^1 \times \E$ given by
\begin{equation}\label{eqn:tan}
\rD\rho_k \,:\; (t,x,T,X) \;\mapsto\; \rho_k(t,X) + T \cdot  \rho_{k+1}(t,x) .
\end{equation}
Once this is established, $\rT\rho_k$ is sc$^0$ by scale-continuity of $\rho_k,\rho_{k+1}$. In fact, $\rT\rho_k$, as a sum and product of sc$^1$ maps, is sc$^1$, and further induction proves that $\rho_k$ and thus also $s$ and $s^{-1}$ are all sc$^\infty$.

The above claims and \eqref{eqn:tan} for $t\neq 0$ follow from the maps $\rho_k: E_{k+i}\to E_i$ all being classically differentiable with differential
\begin{align*}
\rD \rho_k(t,x,T,X) 
&\;=\;
\tfrac{\rd}{\rd s}\big|_{s=0} \rho_k(t+s T,x+s X)
\;=\; \textstyle
\tfrac{\rd}{\rd s}\big|_{s=0} 
\sum_{n=0}^\infty f^{(k)}_n(t+s T) p_n(x+s X) \\
& \;=\;  \textstyle
\sum_{n=0}^\infty \bigl( T f^{(k+1)}_n(t) p_n(x) + f^{(k)}_n(t) p_n(X) \bigr)
\;=\; T \cdot \rho_{k+1}(t,x) + \rho_k(t,X).
\end{align*}
To see that $\rho_0$ is well-defined note that $(e_n)_{n\in\N_0}\subset E_i$ is orthogonal on each scale $i\in\N_0$, so
\begin{align*}\textstyle
\bigl\|\rho_0 (t, x)\bigr\|_i 
&\;=\; 
\textstyle
\bigl\| \sum f_n(t) p_n(x) \bigr\|_i
\;=\; \bigl( \sum f_n(t)^2 \|  p_n(x) \|_i^2 \bigr)^{1/2}
\;\leq\; \bigl( \sup_n \|f_n\|_\infty^2 \sum \|p_n(x) \|_i^2 \bigr)^{1/2} \\
&\;=\; \textstyle
\sup_n \|f_n\|_\infty \cdot \bigl\|\sum p_n(x) \bigr\|_i
\;=\; \|f_1\|_\infty \|x\|_i  \;\leq\; 2 \|x\|_i , 
\end{align*}
where $\|f_1\|_\infty = \|f\|_\infty= 1$ or $\|f_1\|_\infty= \|\tfrac 1f\|_\infty = 2$ if we choose $f:\R\to\R$ with values in $[\frac 12, 1]$.

To check sc-continuity of $\rho_0$ at $t=0$ we fix a level $i\in\N_0$ and $x\in E_i$ and estimate for $\R\times E_i\ni (t,h)\to 0$ with $N_t:=\lfloor t^{-1} \rfloor$ for $t>0$ and $N_t:=\infty$ for $t\leq 0$
\begin{align*} 
\|\rho_0(t,x+h) - \rho_0(0,x)\|_i
&\;=\;
\|\rho_0(t, h) + \rho_0(t,x) - x\|_i  
\;\le\; \textstyle
\|\rho_0(t, h)\|_i + \|\sum (f_n(t)-1) p_n(x) \|_i  \\
& \;\le\; \textstyle
2 \|h\|_i + \bigl\| \sum_{n=N_t}^\infty (f_n(t)-1) p_n(x) \|_i \\
&\;\le\; \textstyle
2 \|h\|_i + \sup_n \|f_n -1 \|_\infty \bigl\| \sum_{N_t}^\infty p_n(x) \bigr\|_i 
\;\underset{|t|+\|h\|_i \to 0}{\longrightarrow}\; 0 . 
\end{align*}
Here we used the facts that $f_n(t)=1$ for $n\leq t^{-1}-1$, and that $x=\lim_{N\to\infty} \sum_{n=0}^N p_n(x)\in E_i$ converges, hence as $N_t=\lfloor t^{-1}\rfloor \to \infty$ with $t\to 0$ we have
$\bigl\| \sum_{n=N_t}^\infty p_n(x) \bigr\|_i  \to 0$.

Differentiability of $\rho_0$ with $D\rho_0(0,x,T,X)=\rho_0(0,X)+T\rho_1(0,x) = X$ as claimed in \eqref{eqn:tan} amounts to estimating for $x\in E_{i+1}$ and $t>0$, using \eqref{fest} and \eqref{pnsum},
\begin{align*}
&\bigl\|\rho_0(t, x+X) - \rho_0(0, x) - \rho_0(0,X) \bigr\|_i \\
&\qquad\qquad=\; \textstyle 
\bigl\|\sum f_n(t) p_n(x+X)  - x - X \bigr\|_i
\;=\; \textstyle
\bigl\|\sum_{n=N_t}^\infty (f_n(t)-1) p_n(x + X) \bigr\|_i  \\
&\qquad\qquad\leq\; \textstyle
 \sup_n \|f_n-1\|_\infty \| \sum_{n=N_t}^\infty p_n(x + X)\|_i 
\;\leq\; \textstyle
 N_t^{-3} \| x + X \|_{i+1} ,
\end{align*}
whereas for $t\leq 0$ we have
$\bigl\|\rho_0(t, x+X) - \rho_0(0, x) - \rho_0(0,X) \bigr\|_i= \bigl\| x+X - x - X \|_i =0$. So together we obtain the required convergence of difference quotients,
$$
\frac{\|\rho_0(t, x+X) - \rho_0(0, x) - \rho_0(0,X) \|_i}{|t|+\|X\|_{i+1}} 
\;\leq\; 
\frac{ \max\bigl(0,\lfloor t^{-1} \rfloor^{-3} \bigr) \|x+ X \|_{i+1}}
{|t|+\|X\|_{i+1}}
\underset{|t|+\|X\|_{i+1}\to 0}{\longrightarrow} 0 .
$$
For $k\geq 1$ recall that $\rho_k (t, \cdot ) = f^{(k)}_{N_t}(t) p_{N_t}$ with $N_t=\lfloor t^{-1} \rfloor$ for $t>0$ and $\rho_k(t, \cdot)\equiv 0$ for $t\leq 0$ as well as in a small neighborhood $t\sim \frac 1n$ for each $n\in\N$.
Thus the maps $\rho_k (t, \cdot)$ are evidently well-defined and linear on each scale in $E_i$, and continuous (in fact classically smooth) with respect to $t\in \R\setminus\{0\}$.
To check continuity at $t=0$ we fix a level $i\in\N_0$ and $x\in E_{k+i}$ and estimate for $h\in E_{k+i}$ and $t>0$ 
\begin{align*} 
\|\rho_k(t,x+h) \|_i 
&\;=\;  
\bigl\| f^{(k)}_{N_t}(t) \, p_{N_t}(x+h) \bigr\|_i 
\;\leq\; \| f^{(k)}_{N_t}\|_\infty \| p_{N_t}(x+h) \|_i \\
& \;\leq\; N_t^{2k} C_k  N_t^{-3k}\| x+h \|_{i+k}
\;\leq\; N_t^{-k} C_k \bigl( \|x+h\|_{k+i} \bigr),
\end{align*}
where we used \eqref{fest}, \eqref{pnik}. 
Since $\rho_k(t,x)=0$ for $t\leq 0$ this proves continuity
$$
\|\rho_k(t,x+h) - \rho_k(0,x) \|_i 
\;\leq\; \max\bigl(0,\lfloor t^{-1} \rfloor^{-k} \bigr) C_k 
\bigl( \|x + h\|_{k+i} \bigr)
\;\underset{|t|+\|h\|_{k+i} \to 0}{\longrightarrow}\; 0 . 
$$
Finally, differentiability for $k\geq 1$ with $D\rho_k(0,x,T,X)=\rho_k(0,X)+T\rho_{k+1}(0,x) = 0$ 
as claimed in \eqref{eqn:tan} follows from the analogous estimate for $x\in E_{k+i+1}$ and $t>0$ 
$$
\bigl\|\rho_k(t, x+X) - \rho_k(0, x) - \rho_k(0,X) \bigr\|_i 
\;=\; \bigl\|\rho_k(t, x+X) \bigr\|_i
\;\leq\; N_t^{-k-3} C_k  \|x+X\|_{k+i+1} , 
$$
while for $t\leq 0$ we have
$\bigl\|\rho_k(t, x+X) - \rho_k(0, x) - \rho_k(0,X) \bigr\|_i =0$. So together we obtain the required convergence of difference quotients,
$$
\frac{\|\rho_k(t, x+X) - \rho_k(0, x) - \rho_k(0,X) \|_i}{|t|+\|X\|_{k+i+1}} 
\leq
\frac{ \max\bigl(0,\lfloor t^{-1} \rfloor^{-k-3}\bigr) C_k \|x+ X \|_{k+i+1}}
{|t|+\|X\|_{k+i+1}}
\underset{|t|+\|X\|_{k+i+1}\to 0}{\longrightarrow} 0 .
$$
This proves for all $k\in\N_0$ that $\rho_k$ is sc$^0$ and sc-differentiable with \eqref{eqn:tan}, and thus finishes the proof of sc-smoothness of $s$ and $s^{-1}$.
\end{proof}

\bibliographystyle{amsplain}
\bibliography{references}
\end{document}